\numberwithin{equation}{section}
\numberwithin{figure}{section}
  \theoremstyle{plain}
  \newtheorem{thm}{\protect\theoremname}[section]
  \newtheorem{prop}[thm]{\protect\propositionname}
  \theoremstyle{plain}
  \theoremstyle{definition}
	\theoremstyle{definition}
  \newtheorem{defn}[thm]{\protect\definitionname}
  \theoremstyle{plain}
  \newtheorem{lem}[thm]{\protect\lemmaname}
  \newtheorem{cor}[thm]{\protect\corollaryname}
  \theoremstyle{plain}
	\newtheorem{exa}[thm]{\protect\examplename}
  \theoremstyle{plain}
	\theoremstyle{remark}
  \newtheorem{rem}[thm]{\protect\remarkname}
  \providecommand{\definitionname}{Definition}
  \providecommand{\lemmaname}{Lemma}
  \providecommand{\theoremname}{Theorem}
  \providecommand{\corollaryname}{Corollary}
	\providecommand{\propositionname}{Proposition}
	\providecommand{\examplename}{Example}
	\providecommand{\remarkname}{Remark}
	\DeclareMathOperator{\loc}{loc}
\begin{document}

\title[On the First Eigenvalues of Free Vibrating Membranes]{On the First Eigenvalues of Free Vibrating Membranes in Conformal Regular Domains}

\author{V.~Gol'dshtein and A.~Ukhlov}
\begin{abstract}
In 1961 G.~Polya published a paper about the eigenvalues of vibrating membranes. The "free vibrating membrane" corresponds to the Neumann-Laplace operator in bounded plane domains. In this paper we obtain estimates for the first non-trivial eigenvalue of this operator in a large class of domains that we call conformal regular domains. This case includes convex domains, John domains etc... On the base of our estimates we conjecture that the eigenvalues of the Neumann-Laplace operator depend on the hyperbolic metrics of plane domains. We propose a new method for the estimates which is based on weighted Poincar\'e-Sobolev inequalities, obtained by the authors recently.

\end{abstract}
\maketitle
\footnotetext{\textbf{Key words and phrases:} conformal mappings,
Sobolev spaces, elliptic equations.} \footnotetext{\textbf{2000
Mathematics Subject Classification:} 46E35, 35P15, 30C65.}

\section{Introduction }

Let $\Omega\subset\mathbb R^2$ be a bounded simply connected plane domain with a smooth boundary $\partial \Omega$. We consider the Neumann-Laplace spectral  problem (the free membrane problem)
\begin{gather}
-\Delta u=\lambda u\,\,\text{in}\,\,\Omega,\label{Neum01}\\
{\frac{\partial u}{\partial n}}\bigg|_{\partial\Omega}=0.\label{Neum02}
\end{gather}

The weak statement of the spectral problem of the Neumann-Laplace operator is as follows: A function
$u$ solves the previous problem iff $u\in W^{1,2}(\Omega)$ and 
\[
\iint\limits _{\Omega}\nabla u(x,y)\cdot\nabla v(x,y)~dxdy=\lambda\iint\limits _{\Omega}u(x,y)v(x,y)~dxdy
\]
for all $v\in W^{1,2}(\Omega)$. The weak statement of the Neumann-Laplace spectral problem is correct for any domain. 

\vskip 0.3cm

Let $\psi:\mathbb D\to\Omega$ be the Riemann conformal mapping of the unit disc $\mathbb D\subset\mathbb R^2$ onto $\Omega$ that is a simply connected plane domain. We shall say that $\Omega$ is a conformal $\alpha$-regular domain \cite{BGU1} if $\psi'\in L^{\alpha}(\mathbb D)$ for some $\alpha>2$. The degree $\alpha$ does not depends on choice of $\psi$  (by the Riemann Mapping Theorem) and depends on the hyperbolic metric on $\Omega$ only.

\vskip 0.3cm

The main result of the paper is

\vskip 0.3cm

{\bf Theorem A.}
{\it Let $\Omega\subset\mathbb R^2$ be a conformal $\alpha$-regular domain. Then the spectrum of Neumann-Laplace operator in $\Omega$ is discrete, can be written in the form
of a non-decreasing sequence
\[
0=\lambda_0[\Omega]<\lambda_{1}[\Omega]\leq\lambda_{2}[\Omega]\leq...\leq\lambda_{n}[\Omega]\leq...\,,
\]
and
\begin{equation}
\label{eq:est}
{1}/{\lambda_1[\Omega]} \leq \frac{4}{\sqrt[\alpha]{\pi^2}}\left(\frac{2\alpha-2}{\alpha-2}\right)^{\frac{2\alpha-2}{\alpha}}{\|\psi'\mid L^{\alpha}(\mathbb D)\|^2}
\end{equation}
where $\psi:\mathbb D\to\Omega$ is the Riemann conformal mapping of the unit disc $\mathbb D\subset\mathbb R^2$ onto $\Omega$. }

Note, that G.Polya in 1961 (\cite{P60}) obtained upper estimates for eigenvalues of Neumann-Laplace operator in so-called plane-covering domains. Namely, for the first eigenvalue:
$$
{\lambda_1[\Omega]}\leq 4\pi|\Omega|^{-1}.
$$

So, for the plane-covering conformal $\alpha$-regular domains we have the two-side estimate:
$$
\frac{\sqrt[\alpha]{\pi^2}}{4}\left(\frac{2\alpha-2}{\alpha-2}\right)^{\frac{2-2\alpha}{\alpha}}
{\frac{1}{\|\psi'\mid L^{\alpha}(\mathbb D)\|^2}}\leq {\lambda_1[\Omega]}\leq 4\pi\frac{1}{|\Omega|}.
$$

\vskip 0.3cm

The first non-trivial eigenvalue of the Neumann-Laplace operator is connected to the sharp constant in the isoperimetric 
inequalities \cite{M}.  Note, that lower estimates of the first non-trivial eigenvalue of the Neumann-Laplace operator in terms of isoperimetric constants were considered in \cite{BCT09, BCT15}

The Theorem A is based on existence of the universal weighted Poincar\'e-Sobolev inequality. Namely, in any simply connected plane domain with non-empty boundary we have: 

\begin{thm}
\label{thm:PoincareEnWeComp} Suppose that $\Omega\subset\mathbb R^2$ is a simply connected domain with non empty boundary,
$\varphi : \Omega\to\mathbb D$ is a conformal homeomorphism and $h(x,y)=J_{\varphi}(x,y)$ is the conformal weight. Then for every function 
$f\in W^{1,2}(\Omega,h,1)$, the weighted Poincar\'e-Sobolev inequality
\begin{equation}
\biggl(\iint\limits _{\Omega}|f(x,y)-f_{\Omega,h}|^{r}h(x,y)\, dxdy \biggr)^{\frac{1}{r}}\leq B_{r,2}[\Omega,h]\biggl(\iint\limits _{\Omega}
|\nabla f(x,y)|^{2}~dxdy\biggr)^{\frac{1}{2}}\label{eq:WPI}
\end{equation}
holds for every $r \in \left[ 1,\infty \right)$ with the exact constant 
$$
B_{r,2}[\Omega,h]=B_{r,2}[\mathbb D]\leq 2\pi^{\frac{2-r}{2r}}\left({(r+2)}/{2}\right)^{\frac{r+2}{2r}}
$$
 where  $B_{r,2}[\mathbb D]$ is the exact constant in the Poincar\'e inequality for the unit disk $\mathbb D$
$$
\biggl(\int\limits_{\mathbb D}|g(x,y)-g_{\mathbb D}|^r~dxdy\biggr)^{\frac{1}{r}}
\leq B_{r,2}[\mathbb D]\biggl(\int\limits_{\mathbb D}|\nabla g(x,y)|^2~dxdy\biggr)^{\frac{1}{2}}.
$$
\end{thm}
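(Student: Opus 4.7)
The plan is to transfer the inequality from the unit disc to $\Omega$ by pulling back through the conformal map $\psi = \varphi^{-1}\colon \mathbb D \to \Omega$, exploiting the two-dimensional conformal invariance of the Dirichlet integral. Given $f \in W^{1,2}(\Omega, h, 1)$, I would set $g = f \circ \psi$. Since $\varphi$ is conformal, $J_\varphi = |\varphi'|^2 = h$, and the ordinary change of variables yields
\[
\iint_\Omega |f|^r h\, dxdy = \iint_\mathbb D |g|^r\, du\,dv,\qquad \iint_\Omega f h\, dxdy = \iint_\mathbb D g\, du\,dv,
\]
so in particular $f_{\Omega,h} = g_\mathbb D$. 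Conformality also gives, via the chain rule, $|\nabla f(z)|^2 = |\nabla g(\varphi(z))|^2\, |\varphi'(z)|^2$, whence
\[
\iint_\Omega |\nabla f|^2\, dxdy = \iint_\mathbb D |\nabla g|^2\, du\,dv.
\]
Both sides of the weighted inequality on $(\Omega, h)$ thus match term by term the corresponding sides of the unweighted Poincar\'e-Sobolev inequality for $g$ on $\mathbb D$, which gives the exact equality $B_{r,2}[\Omega, h] = B_{r,2}[\mathbb D]$.

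Next I would establish the explicit bound on $B_{r,2}[\mathbb D]$. The estimate $2\pi^{(2-r)/(2r)}\bigl((r+2)/2\bigr)^{(r+2)/(2r)}$ should follow by combining the Poincar\'e inequality for mean-zero functions on $\mathbb D$ with a planar Sobolev-type embedding $W^{1,2}(\mathbb D) \hookrightarrow L^r(\mathbb D)$ (available for every finite $r$ in dimension two), tracking the $r$-dependence of the constants through. Symmetrization can be invoked if needed to reduce to radial extremizers, from which the explicit numerical factor is read off by a direct one-variable calculation. Density of smooth functions in $W^{1,2}(\mathbb D)$ then pulls back, via $\psi$, to density in the weighted space $W^{1,2}(\Omega, h, 1)$, so the inequality extends from smooth to all admissible $f$.

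The conformal transfer itself is conceptually clean: the weight $h = J_\varphi$ is \emph{exactly} the one that makes both transformation identities hold with unit constant, which is what preserves the sharp constant. I expect the main obstacle to be twofold. First, one must verify that the pull-back $f \mapsto f\circ\psi$ is a genuine isometric isomorphism of $W^{1,2}(\Omega, h, 1)$ onto $W^{1,2}(\mathbb D)$; this requires a careful ACL / composition argument at $\partial\Omega$, where $\psi'$ may degenerate for very irregular simply connected domains. Second, extracting the explicit $r$-dependence with the precise exponent $(r+2)/(2r)$ requires sharp tracking through the embedding chain rather than an abstract finiteness statement, and is the numerically delicate part of the argument.
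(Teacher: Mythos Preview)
Your conformal transfer argument is correct and is exactly what the paper does: set $g=f\circ\psi$, use the change-of-variable identities to match the weighted $L^r$ norm on $\Omega$ with the unweighted one on $\mathbb D$ (and likewise $f_{\Omega,h}=g_{\mathbb D}$), and invoke the conformal invariance of the Dirichlet integral to match the right-hand sides. This gives $B_{r,2}[\Omega,h]=B_{r,2}[\mathbb D]$ with no loss, just as you say.

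The gap is in the second half. Your plan for the explicit bound---combine a mean-zero Poincar\'e inequality with a Sobolev embedding, and ``invoke symmetrization if needed''---is too vague to produce the specific constant $2\pi^{(2-r)/(2r)}\bigl((r+2)/2\bigr)^{(r+2)/(2r)}$, and in fact a symmetrization argument would tend to yield a \emph{different} (typically sharper) constant, not this one. The particular shape of the exponent $(r+2)/(2r)$ is the fingerprint of a Riesz-potential estimate. The paper obtains it from two classical lemmas in Gilbarg--Trudinger: for a convex domain one has the pointwise bound
\[
|f(x)-f_\Omega|\le \frac{d^n}{n|\Omega|}\int_\Omega |x-y|^{1-n}\,|\nabla f(y)|\,dy,
\]
and the Riesz potential $V_\mu f(x)=\int_\Omega |x-y|^{n(\mu-1)}f(y)\,dy$ with $\mu=1/n$ maps $L^p(\Omega)\to L^q(\Omega)$ with the explicit norm bound $\bigl((1-\delta)/(\mu-\delta)\bigr)^{1-\delta}\omega_n^{1-\mu}|\Omega|^{\mu-\delta}$, where $\delta=1/p-1/q$. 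Specializing to $n=2$, $p=2$, $q=r$, $\Omega=\mathbb D$ (so $d=2$, $|\Omega|=\pi$, $\omega_2=\pi$) and multiplying the two constants gives exactly the stated expression. If you want the numerical upper bound as written, this is the route to take; your embedding/symmetrization sketch would at best give \emph{some} finite $r$-dependent constant, not this one.
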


The paper is organized as follows:

In Section 2 we discuss the notion of conformal regular domains, formulate the Poincar\'e-Sobolev inequality for conformal regular domains and discuss its connection with the Neumann-Laplace operator. The main point is an estimate for the constant in this inequality. We also discuss connection between  the Poincar\'e-Sobolev inequality and composition operators on Sobolev spaces. Section 2 can be seen as an extension of the introduction. In Section 3 we prove main facts about composition operators in the conformal regular domains.  In Section 4 we prove the Poincar\'e-Sobolev inequality for conformal regular domains. In section 5 and 6 we apply results of Section 4 to lower estimates of the first eigenvalue for the Neumann-Laplace operator in the conformal regular domains. In Section 7 we compare the lower estimates with previous results available in the literature.

\section{The Neumann-Laplace problem in conformal regular domains}

Let $\Omega\subset\mathbb R^2$ be a simply connected plane domain of finite area and $\psi:\mathbb D\to\Omega$ be a conformal mapping. Then
$$
\iint\limits_{\mathbb D} |\psi'(u,v)|^2~dudv= \iint\limits_{\mathbb D} J_{\psi}(u,v)~dudv=|\Omega|<\infty.
$$
Integrability of the derivative in the power $\alpha>2$ is impossible without additional assumptions on the geometry of $\Omega$. We proved in \cite{GU4} that the integrability in the power $\alpha>2$ leads to finiteness of the geodesic diameter of $\Omega$ and as result it is a bounded domain. 

A domain $\Omega$ is a conformal regular domain if it is an $\alpha$-regular domain for some $\alpha>2$. Note that any $C^2$-smooth simply connected bounded domain domain is $\infty$-regular (see, for example, \cite{Kr}).

\vskip0.3cm

The notion of conformal regular domains was introduced in \cite{BGU1} and was applied to the stability problem for eigenvalues of the Dirichlet-Laplace operator.  It does not depends on choice
of a conformal mapping $\psi:{\mathbb D}\to\Omega$ and can be reformulated
in terms of the hyperbolic metrics \cite{BGU1}. Namely
\[
\iint\limits _{\mathbb D}|\psi'(u,v)|^{\alpha}~dudv=
\iint\limits _{\mathbb D}\left(\frac{\lambda_{\mathbb D}(u,v)}{\lambda_{\Omega}(\psi(u,v))}\right)^{\alpha}~dudv
\]
 when $\lambda_{\mathbb D}$ and $\lambda_{\Omega}$ are hyperbolic
metrics in $\mathbb D$ and $\Omega$ \cite{BM}.

\vskip0.3cm

Note that a boundary $\partial \Omega$ of a conformal regular domain can have any Hausdorff dimension between one and two, but can not be equal two \cite{HKN}.

\vskip0.3cm

In Section 4 we prove the following Poincar\'e-Sobolev inequality for conformal regular domains

\begin{thm}
\label{thm:PoincareEnComp} Suppose that $\Omega\subset\mathbb R^2$ is a conformal $\alpha$-regular domain.
Then for every function 
$f\in W^{1,2}(\Omega)$, the inequality 
\begin{equation}
\inf\limits_{c\in\mathbb R}\biggl(\iint\limits _{\Omega}|f(x,y)-c|^{s}\, dxdy \biggr)^{\frac{1}{s}}\leq B_{s,2}[\Omega]\biggl(\iint\limits _{\Omega}
|\nabla f(x,y)|^{2}~dxdy\biggr)^{\frac{1}{2}}\label{eq:NonWPI}
\end{equation}
holds with the constant 
$$
B_{s,2}[\Omega]\leq \|J_{\varphi^{-1}}| L^{\frac{r}{r-s}}(\mathbb D)\|^{\frac{1}{s}}B_{r,2}[\Omega,h]
\leq 2\pi^{\frac{2-r}{2r}}\left({(r+2)}/{2}\right)^{\frac{r+2}{2r}}\cdot\|J_{\varphi^{-1}}| L^{\frac{r}{r-s}}(\mathbb D)\|^{\frac{1}{s}} 
$$
for any $s\in[1,\infty)$, where $r={\alpha s}/{(\alpha-2)}$.
\end{thm}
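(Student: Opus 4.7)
The plan is to deduce this unweighted inequality from the weighted Poincar\'e--Sobolev estimate of Theorem~\ref{thm:PoincareEnWeComp} by H\"older's inequality, to convert the resulting weight integral on $\Omega$ into an integral of $J_{\varphi^{-1}}$ on $\mathbb D$ via change of variables, and then to invoke conformal $\alpha$-regularity to control that integral.

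First I would fix $c\in\mathbb R$, decompose $|f-c|^{s}=\bigl(|f-c|^{s}h^{s/r}\bigr)\cdot h^{-s/r}$, and apply H\"older's inequality with conjugate exponents $r/s$ and $r/(r-s)$ (both well-defined since $r>s\geq 1$ when $\alpha>2$) to obtain
\[
\iint_\Omega |f-c|^s\,dxdy \leq \Bigl(\iint_\Omega |f-c|^r\,h\,dxdy\Bigr)^{\!s/r}\Bigl(\iint_\Omega h^{-s/(r-s)}\,dxdy\Bigr)^{\!(r-s)/r}.
\]
Raising to the $1/s$-th power and choosing $c=f_{\Omega,h}$, Theorem~\ref{thm:PoincareEnWeComp} bounds the first factor on the right by $B_{r,2}[\Omega,h]\bigl(\iint_\Omega |\nabla f|^2\,dxdy\bigr)^{1/2}$.

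Next I would rewrite the second factor on $\mathbb D$. Setting $\psi:=\varphi^{-1}$, the identities $h=J_{\varphi}$ and $J_{\varphi}(\psi(u,v))=1/J_{\psi}(u,v)$, together with the change of variables $(x,y)=\psi(u,v)$, yield
\[
\iint_\Omega h^{-s/(r-s)}\,dxdy=\iint_{\mathbb D} J_{\psi}(u,v)^{s/(r-s)+1}\,dudv=\iint_{\mathbb D} J_{\psi}(u,v)^{r/(r-s)}\,dudv,
\]
so after raising to the $1/s$-th power this factor is exactly $\|J_{\varphi^{-1}}\mid L^{r/(r-s)}(\mathbb D)\|^{1/s}$.

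Finally, the exponent $r=\alpha s/(\alpha-2)$ is not arbitrary: it is forced by the requirement $2r/(r-s)=\alpha$, which is precisely what makes $J_{\psi}=|\psi'|^{2}$ belong to $L^{r/(r-s)}(\mathbb D)$ whenever $\psi'\in L^{\alpha}(\mathbb D)$, i.e.\ whenever $\Omega$ is conformal $\alpha$-regular. Assembling the two factors and inserting the explicit bound $B_{r,2}[\Omega,h]\leq 2\pi^{(2-r)/(2r)}((r+2)/2)^{(r+2)/(2r)}$ from Theorem~\ref{thm:PoincareEnWeComp} delivers the stated estimate. There is no serious obstacle here; the essential point is the duality $h\leftrightarrow J_{\varphi^{-1}}$ under pullback by $\psi$, which identifies $\alpha$-regularity as exactly the hypothesis that makes the weighted-to-unweighted passage integrable.
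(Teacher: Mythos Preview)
Your proposal is correct and follows essentially the same route as the paper: the paper (in the proof of Theorem~\ref{thm:comembd}, which is where Theorem~\ref{thm:PoincareEnComp} is actually established) also takes $c=f_{\Omega,h}$, applies H\"older with exponents $r/s$ and $r/(r-s)$ to split off the weight, converts $\int_{\Omega}J_{\varphi}^{-s/(r-s)}\,dxdy$ into $\int_{\mathbb D}J_{\varphi^{-1}}^{\,r/(r-s)}\,dudv$ by the change of variables $\psi=\varphi^{-1}$, and then invokes the weighted Poincar\'e--Sobolev inequality (Theorem~\ref{thm:PoinW12}, equivalently Theorem~\ref{thm:PoincareEnWeComp}) together with the estimate on $B_{r,2}[\mathbb D]$ from Proposition~\ref{EstCon}. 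Your explanation of why $r=\alpha s/(\alpha-2)$ is the correct choice is slightly more explicit than the paper's, but the argument is the same.
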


\begin{rem} The conformal regular domains have an equivalent description in the terms of the $\beta $-(quasi)\-hy\-per\-bolic boundary condition \cite{BP,KOT}. 
In \cite{KOT} it was proved (without estimates of constants) that domains with $\beta$-(quasi)hyperbolic boundary conditions support the $(s,p)$-Poincar\'e-Sobolev inequalities for  $p$ that depends on $\beta$.
\end{rem}

Existence of the Poincar\'e-Sobolev inequality is an essential property of the conformal regular domains. In \cite{GU4} we proved but did not formulated the following fact about conformal regular domains:

\begin{thm}
Let a simply connected domain $\Omega\subset\mathbb R^2$ of finite measure does not support the (s,2)-Poincar\'e-Sobolev inequality
$$
\biggl(\int\limits_{\Omega}|f(x,y)-f_{\Omega}|^{s}~dxdy\biggr)^{\frac{1}{s}}\leq B_{s,2}[\Omega]\biggl(\int\limits_{\Omega}|\nabla f(x,y)|^2~dxdy\biggr)^{\frac{1}{2}}
$$
for some $s\geq 2$. Then $\Omega$ is not a conformal regular domain.
\end{thm}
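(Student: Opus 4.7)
\medskip

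\textbf{Proof plan.} The statement is most naturally handled by contraposition: it suffices to show that every conformal $\alpha$-regular domain $\Omega$ of finite measure supports the $(s,2)$-Poincar\'e-Sobolev inequality for every $s\in[1,\infty)$, and in particular for every $s\geq 2$. The key ingredient is Theorem~\ref{thm:PoincareEnComp}, whose conclusion already supplies the desired inequality with $\inf_{c\in\mathbb R}$ in place of the mean $f_\Omega$. So the entire proof reduces to (a) checking that the constant produced by that theorem is \emph{finite} precisely under the $\alpha$-regularity assumption and (b) trading $\inf_c$ for $f_\Omega$ at the cost of a universal factor.

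For step (a), given $s\in[1,\infty)$ I would apply Theorem~\ref{thm:PoincareEnComp} with the exponent $r=\alpha s/(\alpha-2)$. A one-line computation gives
\[
\frac{r}{r-s}=\frac{\alpha}{2}.
\]
Since $\varphi^{-1}=\psi$ and a conformal mapping has Jacobian $J_{\psi}=|\psi'|^{2}$, the relevant norm appearing in the constant equals
\[
\|J_{\varphi^{-1}}\mid L^{\alpha/2}(\mathbb D)\|=\|\psi'\mid L^{\alpha}(\mathbb D)\|^{2},
\]
which is finite exactly by the definition of conformal $\alpha$-regularity. Hence Theorem~\ref{thm:PoincareEnComp} produces a finite constant $B_{s,2}[\Omega]$, and $\Omega$ satisfies
\[
\inf_{c\in\mathbb R}\Bigl(\iint_{\Omega}|f-c|^{s}\,dxdy\Bigr)^{\frac{1}{s}}\leq B_{s,2}[\Omega]\Bigl(\iint_{\Omega}|\nabla f|^{2}\,dxdy\Bigr)^{\frac{1}{2}}\qquad\text{for all }f\in W^{1,2}(\Omega).
\]

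For step (b), let $c_{0}$ be any (near-)minimizer of $c\mapsto\|f-c\|_{L^{s}(\Omega)}$. By H\"older's inequality,
\[
|f_{\Omega}-c_{0}|\leq\frac{1}{|\Omega|}\iint_{\Omega}|f-c_{0}|\,dxdy\leq|\Omega|^{-\frac{1}{s}}\|f-c_{0}\|_{L^{s}(\Omega)},
\]
so $\|f-f_{\Omega}\|_{L^{s}(\Omega)}\leq\|f-c_{0}\|_{L^{s}(\Omega)}+|f_{\Omega}-c_{0}|\,|\Omega|^{\frac{1}{s}}\leq 2\|f-c_{0}\|_{L^{s}(\Omega)}$. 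Combining with step (a) yields the mean-value form of the $(s,2)$-Poincar\'e-Sobolev inequality for $\Omega$ with constant at most $2B_{s,2}[\Omega]$. This contradicts the hypothesis that $\Omega$ does not support the inequality for some $s\geq 2$, completing the contrapositive.

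The whole argument is essentially bookkeeping on top of Theorem~\ref{thm:PoincareEnComp}; there is no real obstacle. The only place that requires attention is the algebraic identity $r/(r-s)=\alpha/2$ together with the observation $J_{\psi}=|\psi'|^{2}$, which together encode why conformal $\alpha$-regularity is precisely the integrability hypothesis needed to make the constant in Theorem~\ref{thm:PoincareEnComp} finite.
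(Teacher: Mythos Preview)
Your contrapositive argument is correct and is exactly the natural approach given the paper's tools: once Theorem~\ref{thm:PoincareEnComp} (equivalently Theorem~\ref{thm:comembd}) is available, the statement is an immediate corollary, and your step~(b) replacing $\inf_c$ by $f_\Omega$ via H\"older is standard and valid. Note that the paper itself does not supply an in-text proof of this theorem---it simply attributes the result to \cite{GU4}---so there is no detailed argument in the paper to compare against; your derivation is precisely how the result follows from the machinery developed in Sections~2 and~4.
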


\begin{rem} Conformal regular domains domains allows narrow gaps which can destroy the John condition \cite{HMV}.
\end{rem}

It is well known that solvability of the Neumann-Laplace problem and its spectrum
discreteness depends on regularity of $\Omega$ (see, for example, \cite{M}). 

In the present work we suggest a new method for a study of the Poincar\'e-Sobolev inequality in the conformal regular domains. This method is based on the composition operators theory on Sobolev spaces.  It allows us to estimate constants in the Poincar\'e-Sobolev inequalities. 

As an application we study the eigenvalues problem for the Neumann-Laplace operator.

A detailed survey of this eigenvalue problem can be found in \cite{NKP} (for example). A global lower bound of the non-trivial  first eigenvalue  
$\lambda_1[\Omega]$ for convex domains was obtained  in \cite{PW}. We obtain a global lower bound for the first eigenvalue  
$\lambda_1[\Omega]$ in conformal $\alpha$-regular domains, which are not necessary convex domains. 

The suggested method is based on the composition
operators theory \cite{U1,VU1} and its applications to the Sobolev
type embedding theorems \cite{GGu,GU}.
The following diagram illustrate this idea:

\[\begin{array}{rcl}
W^{1,p}(\Omega) & \stackrel{(\varphi^{-1})^*}{\longrightarrow} & W^{1,q}(\mathbb D) \\[2mm]
\multicolumn{1}{c}{\downarrow} & & \multicolumn{1}{c}{\downarrow} \\[1mm]
L^s(\Omega) & \stackrel{\varphi^*}{\longleftarrow} & L^r(\mathbb D)
\end{array}\]

Here the operator $\varphi^{\ast}$ defined by the composition rule $\varphi^{\ast}(f)=f\circ\varphi$ is a bounded composition operator on Lebesgue spaces induced by a homeomorphism $\varphi$ of $\Omega$ and $\mathbb D$ and the operator $(\varphi^{-1})^{\ast}$ defined by the composition rule $(\varphi^{-1})^{\ast}(f)=f\circ\varphi^{-1}$ is a bounded composition operator on Sobolev spaces.

\begin{rem} In the recent works we studied 
composition operators on Sobolev spaces defined on planar domains
in connection with the conformal mappings theory \cite{GU1}. This
connection leads to weighted Sobolev embeddings \cite{GU2,GU3} with
the universal conformal weights. Another application of conformal
composition operators was given in \cite{BGU1} where the spectral
stability problem for conformal regular domains was considered.
\end{rem}

\section{Composition Operators}

\subsection{Composition Operators on Lebesgue Spaces}

For any domain $\Omega \subset \mathbb{R}^{2}$ and any $1\leq p<\infty$
we consider the Lebesgue space 
\[
L^{p}(\Omega):=\left\{ f:\Omega \to R:\|f\mid{L^{p}(\Omega)}\|:=\left(\iint\limits _{\Omega}|f(x,y)|^{p}~dxdy\right)^{1/p}<\infty\right\} .
\]
 
The following theorem  about composition operator on Lebesgue spaces is well known (see, for example \cite{VU1}): 

\begin{thm}
\label{thm:LpLq} A diffeomorphism $\varphi:\Omega\to\Omega'$ between two plane domains $\Omega$ and $\Omega'$ induces a bounded composition operator 
\[
\varphi^{\ast}:L^{r}(\Omega')\to L^{s}(\Omega),\,\,\,1\leq s\leq r<\infty,
\]
(by the chain rule $\varphi^{\ast}(f):=f \circ \varphi$) if and only if 
\begin{gather}
\biggl(\iint\limits _{\Omega'}\left(J_{\varphi^{-1}}(u,v)\right)^{\frac{r}{r-s}}~dudv\biggl)^{\frac{r-s}{rs}}=K<\infty, \,\,\,1\leq s<r<\infty,\nonumber\\
\left(\operatorname{esssup}_{(u,v)\in\Omega'}J_{\varphi^{-1}}(u,v)\right)^{\frac{1}{s}}=K<\infty, \,\,\,1\leq s=r<\infty.
\nonumber
\end{gather}
 The norm of the composition operator $\|\varphi^{\ast}\|=K$.
\end{thm}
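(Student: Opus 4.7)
The plan is to use the change of variables formula (valid because $\varphi$ is a diffeomorphism) to pull the $L^s$-norm on $\Omega$ back to a weighted $L^s$-norm on $\Omega'$, then recognize the resulting expression as a bilinear pairing amenable to H\"older's inequality. Concretely, for any measurable $f$ on $\Omega'$,
\[
\iint_{\Omega}|f(\varphi(x,y))|^{s}\,dxdy \;=\; \iint_{\Omega'}|f(u,v)|^{s}\,J_{\varphi^{-1}}(u,v)\,dudv,
\]
so the boundedness question reduces to controlling $\iint_{\Omega'}|f|^{s}J_{\varphi^{-1}}\,dudv$ by $\|f\mid L^{r}(\Omega')\|^{s}$.

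For the sufficiency direction with $1\le s<r<\infty$, I would apply H\"older's inequality with conjugate exponents $r/s$ and $r/(r-s)$ to the product $|f|^{s}\cdot J_{\varphi^{-1}}$:
\[
\iint_{\Omega'}|f|^{s}J_{\varphi^{-1}}\,dudv \;\le\; \Bigl(\iint_{\Omega'}|f|^{r}\,dudv\Bigr)^{\!s/r} \Bigl(\iint_{\Omega'}J_{\varphi^{-1}}^{r/(r-s)}\,dudv\Bigr)^{\!(r-s)/r}.
\]
Taking $s$-th roots gives $\|\varphi^{\ast}f\mid L^{s}(\Omega)\|\le K\,\|f\mid L^{r}(\Omega')\|$, which proves boundedness with $\|\varphi^{\ast}\|\le K$. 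For $s=r$ the argument is even shorter: bound $J_{\varphi^{-1}}$ above by its essential supremum and pull the constant out.

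For the necessity direction, and to identify the norm exactly as $K$, I would test against the extremizer of H\"older's inequality. For $s<r$, equality forces $|f|^{r}$ proportional to $J_{\varphi^{-1}}^{r/(r-s)}$, so the natural candidate is $f_{0}=J_{\varphi^{-1}}^{1/(r-s)}$; a direct computation then yields $\|\varphi^{\ast}f_{0}\mid L^{s}(\Omega)\|=K\,\|f_{0}\mid L^{r}(\Omega')\|$. For $s=r$, I would test against the indicator functions of small sets where $J_{\varphi^{-1}}$ is close to its essential supremum, obtaining operator norm $\ge (\operatorname{esssup}J_{\varphi^{-1}})^{1/s}$.

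The main technical obstacle I anticipate is the admissibility of the extremizer $f_{0}$: if $K=\infty$ one must show the composition operator is genuinely unbounded, which requires truncating $f_{0}$ on level sets $\{J_{\varphi^{-1}}\le N\}$, verifying that each truncation lies in $L^{r}(\Omega')$, and letting $N\to\infty$ so that $\|\varphi^{\ast}f_{0,N}\mid L^{s}\|/\|f_{0,N}\mid L^{r}\|\to\infty$. A parallel (but easier) localization is needed for the $s=r$ case. Once these test-function computations are carried out carefully, combining them with the H\"older upper bound pins down $\|\varphi^{\ast}\|=K$ and completes the characterization.
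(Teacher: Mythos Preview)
Your argument is correct and is the standard route: the change-of-variables identity reduces the question to a weighted $L^{s}$--$L^{r}$ estimate on $\Omega'$, H\"older with exponents $r/s$ and $r/(r-s)$ gives the upper bound $\|\varphi^{\ast}\|\le K$, and the extremizer $f_{0}=J_{\varphi^{-1}}^{1/(r-s)}$ (with truncation when $K=\infty$) together with the indicator-function test in the $s=r$ case pin down the norm exactly.

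There is no comparison to be made with the paper's proof, because the paper does not prove this theorem: it is stated as a well-known fact with a reference to \cite{VU1} and used as a black box thereafter. Your write-up is precisely the argument one would find in such a reference. The only cosmetic point worth noting is that for a diffeomorphism the Jacobian $J_{\varphi^{-1}}$ is continuous and positive, so in the $s=r$ case ``esssup'' is just ``sup'' and the localization by indicators goes through without measure-theoretic subtleties; likewise the truncations $f_{0,N}$ automatically lie in $L^{r}(\Omega')$ on any set of finite measure.
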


\subsection{Composition Operators on Sobolev Spaces}

We define the Sobolev space $W^{1,p}(\Omega)$, $1\leq p<\infty$,
as a Banach space of locally integrable weakly differentiable functions
$f:\Omega\to\mathbb{R}$ equipped with the following norm: 
\[
\|f\mid W^{1,p}(\Omega)\|=\biggr(\iint\limits _{\Omega}|f(x,y)|^{p}\, dxdy\biggr)^{\frac{1}{p}}+\biggr(\iint\limits _{\Omega}|\nabla f(x,y)|^{p}\, dxdy\biggr)^{\frac{1}{p}}.
\]

We define also the homogeneous seminormed Sobolev space $L^{1,p}(\Omega)$
of locally integrable weakly differentiable functions $f:\Omega\to\mathbb{R}$ equipped
with the following seminorm: 
\[
\|f\mid L^{1,p}(\Omega)\|=\biggr(\iint\limits _{\Omega}|\nabla f(x,y)|^{p}\, dxdy\biggr)^{\frac{1}{p}}.
\]

Recall that the embedding operator $i:L^{1,p}(\Omega)\to L_{\loc}^{1}(\Omega)$
is continuous.

\begin{rem}
By the standard definition functions of $L^{1,p}(\Omega)$ are defined only up to a set of measure zero, but they can be redefined quasi-everywhere i.~e. up to a set of $p$-capacity zero (see, for example \cite{HKM,M}).
\end{rem}

Let $\Omega$ and $\Omega'$ be domains in $\mathbb R^2$. We say that
a diffeomorphism $\varphi:\Omega\to\Omega'$ induces a bounded composition
operator 
\[
\varphi^{\ast}:L^{1,p}(\Omega')\to L^{1,q}(\Omega),\,\,\,1\leq q\leq p\leq\infty,
\]
 by the composition rule $\varphi^{\ast}(f)=f\circ\varphi$, if for
any $f\in L^{1,p}(\Omega')$ the composition $\varphi^{\ast}(f)\in L^{1,q}(\Omega)$
and there exists a constant $K<\infty$ such that 
\[
\|\varphi^{\ast}(f)\mid L^{1,q}(\Omega)\|\leq K\|f\mid L^{1,p}(\Omega')\|.
\]

The main result of \cite{U1, VU1} gives an analytic description of composition
operators on Sobolev spaces $L^{1,p}$. We reproduce it here for diffeomorphisms.

\begin{thm}
\label{CompTh} \cite{U1} A diffeomorphism $\varphi:\Omega\to\Omega'$
between two domains $\Omega$ and $\Omega'$ induces a bounded composition
operator 
\[
\varphi^{\ast}:L^{1,p}(\Omega')\to L^{1,q}(\Omega),\,\,\,1\leq q<p<\infty,
\]
 if and only if
\[
K_{p,q}(\varphi;\Omega)=\biggl(\iint\limits _{\Omega}\biggl(\frac{|\varphi'(x,y)|^{p}}{|J_{\varphi}(x,y)|}\biggr)^{\frac{q}{p-q}}~dxdy\biggr)^{\frac{p-q}{pq}}<\infty.
\]
The norm of the composition operator $\|\varphi^{\ast}\|\leq K_{p,q}(\varphi;\Omega)$
\end{thm}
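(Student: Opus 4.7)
The statement is an equivalence, so I split the proof into two directions. The sufficiency half---that boundedness of $\varphi^{\ast}$ with operator norm $\leq K_{p,q}(\varphi;\Omega)$ follows from finiteness of $K_{p,q}(\varphi;\Omega)$---is essentially a direct computation using the chain rule, H\"older's inequality, and change of variables. The necessity half---that boundedness of $\varphi^{\ast}$ forces $K_{p,q}(\varphi;\Omega)<\infty$---is the technical heart of the theorem and requires a test-function argument adapted to the scale mismatch between $L^{1,p}$ and $L^{1,q}$ when $q<p$.

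For sufficiency, I would fix $f\in L^{1,p}(\Omega')$ and use the pointwise chain-rule bound $|\nabla(f\circ\varphi)(x,y)|\leq|\nabla f(\varphi(x,y))|\,|\varphi'(x,y)|$ valid a.e.\ because $\varphi$ is a diffeomorphism. Writing the integrand of $\iint_{\Omega}|\nabla(f\circ\varphi)|^{q}\,dxdy$ as the product
\[
\bigl(|\nabla f(\varphi)|^{p}\,|J_{\varphi}|\bigr)^{q/p}\cdot\bigl(|\varphi'|^{p}/|J_{\varphi}|\bigr)^{q/p},
\]
I would apply H\"older's inequality with conjugate exponents $p/q$ and $p/(p-q)$. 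The first factor, after the change of variables $(u,v)=\varphi(x,y)$, integrates to $\|f\mid L^{1,p}(\Omega')\|^{q}$; the second factor is exactly $K_{p,q}(\varphi;\Omega)^{q}$. Taking $q$-th roots yields the bound $\|\varphi^{\ast}f\mid L^{1,q}(\Omega)\|\leq K_{p,q}(\varphi;\Omega)\cdot\|f\mid L^{1,p}(\Omega')\|$, which is both the boundedness and the norm estimate in one stroke.

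For necessity, assume $\varphi^{\ast}$ is bounded with operator norm $K$; the goal is to deduce $K_{p,q}(\varphi;\Omega)\leq K$. I expect the principal obstacle to lie here: the naive infinitesimal strategy---testing against a cut-off affine function on a ball of radius $r$ centered at a Lebesgue point of $\varphi'$ and letting $r\to 0$---is defeated by the scaling mismatch, because the resulting inequality carries a factor $r^{2/p-2/q}$ which blows up when $q<p$. The route I would follow is the capacitary framework of Vodopyanov--Ukhlov: for any condenser $(F,E)$ in $\Omega'$, if $u$ is an admissible potential for $(F,E)$ then $u\circ\varphi$ is admissible for the pulled-back condenser $(\varphi^{-1}F,\varphi^{-1}E)\subset\Omega$, which yields the condenser capacity inequality
\[
\mathrm{cap}_{q}(\varphi^{-1}F,\varphi^{-1}E;\Omega)\leq K^{q}\,\mathrm{cap}_{p}(F,E;\Omega')^{q/p}.
\]
Combining this with Maz'ya's capacitary characterization of $(p,q)$-type weighted embeddings, applied to the push-forward measure $\varphi_{\ast}(|\varphi'|^{q}\,dxdy)$ on $\Omega'$, then recovers the desired integrability of $(|\varphi'|^{p}/|J_{\varphi}|)^{q/(p-q)}$ with the correct constant. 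This capacity-to-integrability passage is the step that genuinely uses $q<p$, and it is the one I expect to consume most of the technical work; sufficiency is by comparison routine and already delivers the sharp operator-norm bound advertised in the statement.
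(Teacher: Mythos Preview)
The paper does not prove this theorem at all: it is quoted as the main result of \cite{U1,VU1} and merely restated for diffeomorphisms, so there is no in-paper argument to compare against. Your outline is nonetheless faithful to the original sources. The sufficiency direction is complete and correct as you wrote it: the chain-rule bound, the factorisation of $|\nabla f(\varphi)|^{q}|\varphi'|^{q}$ as $\bigl(|\nabla f(\varphi)|^{p}|J_{\varphi}|\bigr)^{q/p}\bigl(|\varphi'|^{p}/|J_{\varphi}|\bigr)^{q/p}$, H\"older with exponents $p/q$ and $p/(p-q)$, and the change of variables together give exactly $\|\varphi^{\ast}f\mid L^{1,q}(\Omega)\|\leq K_{p,q}(\varphi;\Omega)\|f\mid L^{1,p}(\Omega')\|$. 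For necessity you correctly diagnose why the naive local test-function argument fails when $q<p$ and point to the capacitary route of Vodop'yanov--Ukhlov; that is indeed the mechanism in \cite{U1,VU1}, so your plan matches the literature the paper is citing rather than anything the paper itself supplies.
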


\begin{defn}
We call a bounded domain $\Omega\subset\mathbb{C}$
as $(r,q)$-Poincar\'e domain, $1\leq q,r\leq\infty$, if the following Poincar\'e-Sobolev inequality
$$
\inf\limits _{c\in\mathbb{R}}\|g-c\mid L^{r}(\Omega)\|
\leq B_{r,q}[\Omega] \|g\mid L^{1,q}(\Omega)\|
$$
holds for any $g\in L^{1,q}(\Omega)$  with the constant $B_{r,q}[\Omega]<\infty$. The unit disc $\mathbb D\subset\mathbb R^2$ is an example of the $(r,2)$-embedding domain for all $r\geq 1$. 
\end{defn}

The following theorem gives a characterization of composition operators in the classical Sobolev spaces $W^{1,p}$. This theorem was proved, but did
not formulated in \cite{GGu,GU}. For readers convenience we reproduce here the proof. 

\begin{thm}
\label{thm:boundWW} Let $\Omega\subset\mathbb R^n$ be an $(r,q)$-Poincar\'e domain for some $1\leq q\leq r\leq\infty$ and a domain $\Omega'$ has finite measure.
Suppose that a diffeomorphism $\varphi:\Omega\to\Omega'$ induces
a bounded composition operator 
\[
\varphi^{\ast}:L^{1,p}(\Omega')\to L^{1,q}(\Omega),\,\,\,1\leq q\leq p<\infty,
\]
 and the inverse diffeomorphism $\varphi^{-1}:\Omega'\to\Omega$ induces
a bounded composition operator 
\[
(\varphi^{-1})^{\ast}:L^{r}(\Omega)\to L^{s}(\Omega'),\,\,\,1\leq s\leq r<\infty,
\]
 for some $p\leq s\leq r$.

 Then $\varphi:\Omega\to\Omega'$ induces
a bounded composition operator 
\[
\varphi^{\ast}:W^{1,p}(\Omega')\to W^{1,q}(\Omega),\,\,\,1\leq q\leq p<\infty.
\]
 \end{thm}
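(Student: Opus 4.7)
\emph{Setup.} Given $f \in W^{1,p}(\Omega')$, put $g := \varphi^{\ast}f = f\circ\varphi$. The hypothesis that $\varphi^{\ast} : L^{1,p}(\Omega') \to L^{1,q}(\Omega)$ is bounded with some norm $K$ already controls the gradient part of $\|g\mid W^{1,q}(\Omega)\|$, so the entire task reduces to estimating $\|g\mid L^{q}(\Omega)\|$ in terms of $\|f\mid W^{1,p}(\Omega')\|$.

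\emph{Poincar\'e step on $\Omega$.} For each $\epsilon>0$, choose a real constant $c=c(g)$ nearly realising the infimum in the $(r,q)$-Poincar\'e inequality:
\[
\|g-c\mid L^{r}(\Omega)\| \le B_{r,q}[\Omega]\,\|\nabla g\mid L^{q}(\Omega)\|+\epsilon.
\]
Since $\Omega$ is bounded (by definition of an $(r,q)$-Poincar\'e domain) and $q\le r$, H\"older's inequality and the triangle inequality give
\[
\|g\mid L^{q}(\Omega)\| \le |\Omega|^{\frac{1}{q}-\frac{1}{r}}\,\|g-c\mid L^{r}(\Omega)\| + |c|\,|\Omega|^{\frac{1}{q}}.
\]
The first summand is already bounded by $\|\nabla f\mid L^{p}(\Omega')\|$ via the Poincar\'e estimate and the gradient bound, so everything reduces to a linear bound on $|c|$ in terms of the $W^{1,p}(\Omega')$-norm of $f$.

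\emph{Transporting $c$ back to $\Omega'$.} The key identity is that $(\varphi^{-1})^{\ast}$ commutes with subtraction of constants, so $(\varphi^{-1})^{\ast}(g-c) = g\circ\varphi^{-1}-c = f-c$ on $\Omega'$. The Lebesgue composition hypothesis then yields
\[
\|f-c\mid L^{s}(\Omega')\| \le M\,\|g-c\mid L^{r}(\Omega)\|,
\]
where $M$ is the operator norm of $(\varphi^{-1})^{\ast}: L^{r}(\Omega)\to L^{s}(\Omega')$. Because $p\le s$ and $|\Omega'|<\infty$, H\"older gives $\|f-c\mid L^{p}(\Omega')\| \le |\Omega'|^{1/p-1/s}\|f-c\mid L^{s}(\Omega')\|$, after which the triangle inequality applied to $c=f-(f-c)$ yields
\[
|c|\,|\Omega'|^{1/p} = \|c\mid L^{p}(\Omega')\| \le \|f\mid L^{p}(\Omega')\| + \|f-c\mid L^{p}(\Omega')\|,
\]
producing the desired linear bound $|c| \le C_{1}\|f\mid L^{p}(\Omega')\| + C_{2}\|\nabla f\mid L^{p}(\Omega')\|$. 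Concatenating all estimates and letting $\epsilon\to 0$ completes the proof.

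\emph{Main obstacle.} The only non-routine point is identifying the correct vehicle for the additive constant: the Lebesgue composition needed to pull $c$ from $\Omega$ back to $\Omega'$ is $(\varphi^{-1})^{\ast}$, not $\varphi^{\ast}$, and the interplay between the exponent condition $p\le s$ and the finite measure of $\Omega'$ is precisely what converts the transported $L^{s}(\Omega')$-estimate into an $L^{p}(\Omega')$-estimate compatible with the $W^{1,p}$-norm. Once this geometric dictionary is set up, the remainder is routine triangle inequality and H\"older bookkeeping.
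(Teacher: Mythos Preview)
Your proof is correct and follows essentially the same route as the paper's: both arguments control $\|g\mid L^q(\Omega)\|$ by splitting off a near-optimal constant $c$ from the $(r,q)$-Poincar\'e inequality on $\Omega$, transporting $g-c$ to $\Omega'$ via $(\varphi^{-1})^{\ast}:L^r(\Omega)\to L^s(\Omega')$, and then bounding $|c|$ through the triangle inequality and H\"older on the finite-measure domain $\Omega'$ using $p\le s$. Your $\epsilon$-argument for the possibly non-attained infimum is a minor technical refinement the paper omits, but otherwise the ingredients and their assembly are the same.
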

\begin{proof}
 Let $f \in W^{1,p}(\Omega)$ and $g=\varphi^{\ast}(f)$. 
Because $p\leq r$ and the composition operator $(\varphi^{-1})^{\ast}:L^{r}(\Omega)\to L^{s}(\Omega')$
is bounded the following inequality 
\[
\|(\varphi^{-1})^{\ast}g\mid L^{s}(\Omega')\|\leq A_{r,s}(\Omega)\|g\mid L^{r}(\Omega)\|
\]
 is correct for a positive constant $A_{r,s}(\Omega)$. 

Since domain $\Omega$ is a $(r,q)$-Poincar\'e domain 
$$
\inf\limits _{c\in\mathbb{R}}\|g-c\mid L^{r}(\Omega)\|
\leq B_{r,q}[\Omega]\|g\mid L^{1,q}(\Omega)\|.
$$
and the composition operator 
\[
\varphi^{\ast}:L^{1,p}(\Omega')\to L^{1,q}(\Omega)
\]
is bounded we obtain finally the following
inequalities 
\begin{multline*}
\inf\limits _{c\in\mathbb{R}}\|f-c\mid L^{s}(\Omega')\|\leq A_{r,s}(\Omega)\inf\limits _{c\in\mathbb{R}}\|g-c\mid L^{r}(\Omega)\|\\
\leq A_{r,s}(\Omega)B_{r,q}[\Omega]\|g\mid L^{1,q}(\Omega)\|\leq A_{r,s}(\Omega)K_{p,q}(\Omega)B_{r,q}[\Omega]\|f\mid L^{1,p}(\Omega')\|
\end{multline*}
 hold. Here  $K_{p,q}(\Omega)$ is the upper bound of the norm of the composition operator $\varphi^{\ast}:L^{1,p}(\Omega')\to L^{1,q}(\Omega)$.

The H\"older inequality implies the following estimate 
\begin{multline*}
|c|=|\Omega'|^{-\frac{1}{p}}\|c\mid L^{p}(\Omega')\|\leq|\Omega'|^{-\frac{1}{p}}\bigl(\|f\mid L^{p}(\Omega')\|+\|f-c\mid L^{p}(\Omega')\|\bigr)\\
\leq|\Omega'|^{-\frac{1}{p}}\|f\mid L^{p}(\Omega')\|+|\Omega'|^{-\frac{1}{s}}\|f-c\mid L^{s}(\Omega')\|.
\end{multline*}

Because $q\leq r$ we have 
\begin{multline*}
\|g\mid L^{q}(\Omega)\|\leq\|c\mid L^{q}(\Omega)\|+\|g-c\mid L^{q}(\Omega)\|\leq|c||\Omega|^{\frac{1}{q}}+|\Omega|^{\frac{r-q}{r}}\|g-c\mid L^{r}(\Omega)\|\\
\leq\biggl(|\Omega'|^{-\frac{1}{p}}\|f\mid L^{p}(\Omega')\|+|\Omega'|^{-\frac{1}{s}}\|f-c\mid L_{s}(\Omega')\|\biggr)|\Omega|^{\frac{1}{q}}+|\Omega|^{\frac{r-q}{r}}\|g-c\mid L^{r}(\Omega)\|.
\end{multline*}

Combining previous inequalities we obtain finally
\begin{multline*}
\|g\mid L^{q}(\Omega)\|\leq|\Omega|^{\frac{1}{q}}|\Omega'|^{-\frac{1}{p}}\|f\mid L^{p}(\Omega')\|\\
+A_{r,s}(\Omega)K_{p,q}(\Omega)B_{r,q}[\Omega]|\Omega|^{\frac{1}{q}}|\Omega'|^{-\frac{1}{p}}\|f\mid L^{1,p}(\Omega')\|\\
+K_{p,q}(\Omega)B_{r,q}[\Omega]|\Omega|^{\frac{r-q}{r}}\|f\mid L^{1,p}(\Omega)\|.
\end{multline*}
 Therefore the composition operator 
\[
\varphi^{\ast}:W^{1,p}(\Omega')\to W^{1,q}(\Omega)
\]
 is bounded. 
\end{proof}

\section{Poincar\'e-Sobolev inequalities for functions of $W^{1,2}(\Omega)$}

\subsection{Weighted Lebesgue spaces}

We follow \cite{HKM} for notation and basic facts about weighted
Lebesgue spaces.

Let $\Omega\subset\mathbb{R}^{2}$ be a domain and let $v:\Omega\to \mathbb R$
be a locally integrable almost everywhere positive real valued function in $\Omega$
( i.e $v>0$ almost everywhere). Then a Radon measure $\nu$ is
canonically associated with the weight function $v$:
\[
\nu(E):=\iint_{E}v(x,y)~dxdy.
\]

By the local integrability of $v$, the measure $\nu$ and the Lebesgue
measure are absolutely continuous with respect one to another:
\[
d\nu=v(x,y)dxdy.  
\]
 In what follows, the weight $v$ and the measure $\nu$ will be identified.
The sets of measure zero are the same for the Lebesgue measure and for the measure $\nu$.
It means that we do not need to specify what convergence almost everywhere is. 

Denote by 
$$
\mathcal{V}(\Omega):=\{v\in L^1_{ \loc}(\Omega):v>0\,\, \text{a.~e. on}\,\, \Omega \}
$$ the set of all such weights. Here $L^1_{\loc}(\Omega)$
is the space of locally integrable functions in $\Omega$.

For $1\leq p<\infty$ and $v\in\mathcal{V}(\Omega)$, consider the
weighted Lebesgue space 
\[
L^{p}(\Omega,v):=\left\{ f:\Omega\to R:\| f\mid {L^{p}(\Omega,v)}\|:=\left(\iint_{\Omega}|f(x,y)|^{p}v(x,y)~dxdy\right)^{1/p}<\infty \right\}.
\]
It is a Banach space for the norm $\|f\mid{L^{p}(\Omega,v)}\|$.

The space $L^{p}(\Omega,v)$ may fail to embed into $L^1_{\loc}(\Omega)$. 

\begin{prop}
\cite{HKM}\label{WL} If $v^{\frac{1}{1-p}}\in L^1_{\loc}(\Omega)$
and $1< p<\infty$ then the embedding operator $i:L^{p}(\Omega,v)\to L^1_{\loc}(\Omega)$
is continuous. 

If $v^{-1}\in L^{\infty}_{\loc}(\Omega)$ then the embedding operator
$i:L^{1}(\Omega,v)\to L^1_{\loc}(\Omega)$ is continuous. 
\end{prop}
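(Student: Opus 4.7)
The plan is to establish both continuity statements by a single application of Hölder's inequality on compact subsets $K \subset \Omega$, using the local summability of the appropriate negative power of the weight $v$ to absorb a compensating factor that converts the Lebesgue integral into the weighted one.

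For the first statement with $1 < p < \infty$, I would fix $f \in L^{p}(\Omega, v)$ and a compact set $K \subset \Omega$, and factor the integrand as $|f| = (|f|\, v^{1/p}) \cdot v^{-1/p}$. Hölder's inequality with conjugate exponents $p$ and $p' = p/(p-1)$ then gives
$$
\iint_{K} |f(x,y)|\, dxdy \leq \left(\iint_{K} |f|^{p} v\, dxdy\right)^{1/p} \left(\iint_{K} v^{-p'/p}\, dxdy\right)^{1/p'}.
$$
Since $-p'/p = -\frac{1}{p-1} = \frac{1}{1-p}$, the second factor is finite by the hypothesis $v^{1/(1-p)} \in L^{1}_{\loc}(\Omega)$, and the first factor is bounded by $\|f \mid L^{p}(\Omega, v)\|$. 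Because the resulting constant depends on $K$ but not on $f$, this proves continuity of $i \colon L^{p}(\Omega,v) \to L^{1}_{\loc}(\Omega)$.

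For the second statement ($p=1$), I would argue more directly: writing $|f| = (|f|\, v) \cdot v^{-1}$ and invoking the hypothesis that $v^{-1} \in L^{\infty}_{\loc}(\Omega)$, for any compact $K \subset \Omega$ one obtains
$$
\iint_{K} |f|\, dxdy \leq \|v^{-1} \mid L^{\infty}(K)\| \iint_{K} |f|\, v\, dxdy \leq \|v^{-1} \mid L^{\infty}(K)\| \cdot \|f \mid L^{1}(\Omega, v)\|,
$$
which yields the continuity of $i \colon L^{1}(\Omega,v) \to L^{1}_{\loc}(\Omega)$.

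The argument is essentially routine and I do not anticipate a genuine obstacle. The only arithmetic point worth checking carefully is that the conjugate exponent of $p$ applied to $v^{-1/p}$ produces exactly the power $\frac{1}{1-p}$ on $v$ — which is precisely the power whose local integrability is assumed in the proposition — so the hypothesis is both necessary and sufficient for the Hölder estimate to close.
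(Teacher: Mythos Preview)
Your proof is correct; this is the standard H\"older-inequality argument and the exponent check $-p'/p = 1/(1-p)$ is exactly what makes the hypothesis the right one. Note, however, that the paper does not supply its own proof of this proposition: it is quoted from \cite{HKM} and stated without argument, so there is nothing in the paper to compare your approach against.
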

For $1<p<\infty$, we put
\[
\mathit{\mathcal{V}_{p}(\Omega):=\left\{ v\in\mathcal{V}(\Omega):v^{\frac{1}{1-p}}\in L^1_{\loc}(\Omega)\right\} }
\]
and for $p=1$, 
\[
\mathit{\mathcal{V}_{1}(\Omega):=\left\{ v\in\mathcal{V}(\Omega):v^{-1}\in L^{\infty}_{\loc}(\Omega)\right\} }.
\]

\begin{cor}
If a weight $v$ is continuous and positive then $i:L^{p}(\Omega,v)\to L^1_{\loc}(\Omega)$
is continuous.
\end{cor}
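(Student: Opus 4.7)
The plan is to derive the corollary directly from Proposition~\ref{WL} by verifying that a continuous positive weight $v$ automatically satisfies the hypothesis $v^{1/(1-p)}\in L^{1}_{\loc}(\Omega)$ for $1<p<\infty$, and the hypothesis $v^{-1}\in L^{\infty}_{\loc}(\Omega)$ in the endpoint case $p=1$. In other words, the corollary is essentially a pointwise-regularity reformulation of the local-integrability assumption appearing in the proposition, and the only real work is to exploit compactness.

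For $1<p<\infty$, I would fix an arbitrary compact set $K\subset\Omega$. Since $v$ is continuous and strictly positive on $\Omega$, the restriction $v|_{K}$ attains a positive minimum $m_{K}>0$ and a finite maximum $M_{K}$. Because the exponent $1/(1-p)$ is negative, the function $v^{1/(1-p)}$ is then bounded on $K$ by $m_{K}^{1/(1-p)}$, so $v^{1/(1-p)}\in L^{\infty}(K)\subset L^{1}(K)$. Since $K$ was arbitrary this gives $v^{1/(1-p)}\in L^{1}_{\loc}(\Omega)$, i.e.\ $v\in\mathcal{V}_{p}(\Omega)$, and the first part of Proposition~\ref{WL} yields continuity of $i:L^{p}(\Omega,v)\to L^{1}_{\loc}(\Omega)$.

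For $p=1$ the same continuity-plus-compactness argument gives $v^{-1}\leq m_{K}^{-1}$ on each compact $K\subset\Omega$, hence $v^{-1}\in L^{\infty}_{\loc}(\Omega)$, i.e.\ $v\in\mathcal{V}_{1}(\Omega)$; the second part of Proposition~\ref{WL} then concludes the proof.

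There is no serious obstacle here: the only subtlety worth highlighting is that the quantitative lower bound $v\geq m_{K}>0$ on compact subsets relies on joint use of continuity and pointwise positivity, and would fail if one merely assumed $v>0$ almost everywhere. No delicate weighted-space techniques are required beyond invoking Proposition~\ref{WL}.
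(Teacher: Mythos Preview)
Your argument is correct and follows exactly the paper's approach: the paper simply notes that a continuous positive weight belongs to both $\mathcal{V}_{p}(\Omega)$ and $\mathcal{V}_{1}(\Omega)$ and invokes Proposition~\ref{WL}. You have merely spelled out the compactness argument behind that membership, which the paper leaves implicit.
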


This follows immediately from Proposition \ref{WL} because a continuous and
positive weight belongs to $\mathcal{V}_{p}(\Omega)$ and also to
$\mathcal{V}_{1}(\Omega)$.

\subsection{Weighted Poincar\'e-Sobolev inequalities}

Let $\varphi :\Omega\to\Omega'$ be a conformal homeomorphism. The following fact is well-known:
\begin{lem}
\label{lem:isometry}Let $\Omega$ and $\Omega'$ be two plane domains.
Any conformal homeomorphism $w=\varphi(z):\Omega\to\Omega'$ induces
an isometry of spaces $L^{1,2}(\Omega')$ and $L^{1,2}(\Omega)$.\end{lem}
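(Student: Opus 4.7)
The plan is to prove that $\varphi^{\ast}(f) = f \circ \varphi$ preserves the $L^{1,2}$ seminorm, which in dimension two relies on the fact that a conformal mapping is an angle-preserving diffeomorphism whose Jacobian matrix at each point is a scalar multiple of an orthogonal matrix with scaling factor $|\varphi'|$. In particular $|\varphi'(z)|^{2} = J_{\varphi}(z)$ almost everywhere.

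First I would take $f \in L^{1,2}(\Omega')$ sufficiently smooth (say $C^{\infty}$) and set $g(x,y) = f(\varphi(x,y))$. The classical chain rule gives $\nabla g(x,y) = (\nabla f)(\varphi(x,y)) \cdot D\varphi(x,y)$. Since $\varphi$ is conformal, $D\varphi(x,y) = |\varphi'(x,y)| \cdot R(x,y)$ with $R(x,y)$ an orthogonal matrix, so
\[
|\nabla g(x,y)|^{2} = |\nabla f(\varphi(x,y))|^{2} \cdot |\varphi'(x,y)|^{2} = |\nabla f(\varphi(x,y))|^{2} \cdot J_{\varphi}(x,y).
\]
Integrating over $\Omega$ and applying the change of variables formula $u = \varphi(x,y)$ (valid because $\varphi$ is a smooth diffeomorphism) yields
\[
\iint_{\Omega}|\nabla g(x,y)|^{2}\,dxdy = \iint_{\Omega'}|\nabla f(u,v)|^{2}\,dudv,
\]
which is the desired isometry on smooth functions.

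Next I would extend the identity to arbitrary $f \in L^{1,2}(\Omega')$ by a density/approximation argument: mollifying $f$ on exhausting relatively compact subdomains of $\Omega'$ produces a sequence $f_{k} \in C^{\infty}$ with $\nabla f_{k} \to \nabla f$ in $L^{2}_{\loc}(\Omega')$, and the identity above passes to the limit because $\varphi$ is a diffeomorphism mapping compact sets to compact sets. The same argument in reverse, applied to $\varphi^{-1}$ (which is also conformal), shows that the composition with $\varphi^{-1}$ is an inverse isometry.

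The only delicate step is justifying the chain rule for general $f \in L^{1,2}(\Omega')$; everything else is a routine change of variables. However, since a conformal mapping is $C^{\infty}$ with nowhere vanishing Jacobian, the Sobolev chain rule applies without trouble, and this obstacle is minor. The core of the argument is simply the two-dimensional identity $|\varphi'|^{2} = J_{\varphi}$ for conformal maps, which makes the gradient distortion factor $|\varphi'|^{2}$ exactly cancel the volume distortion factor $J_{\varphi}^{-1}$ coming from the change of variables.
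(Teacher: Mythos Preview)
Your proof is correct and follows essentially the same approach as the paper: compute the $L^{1,2}$ seminorm of $g=f\circ\varphi$ for smooth $f$ using the chain rule together with the conformal identity $|\varphi'|^{2}=J_{\varphi}$, apply the change of variables formula, and then pass to general $f\in L^{1,2}(\Omega')$ by approximation. The paper's argument is slightly terser (it does not spell out the orthogonal-matrix structure of $D\varphi$ or the inverse direction), but the substance is identical.
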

\begin{proof}
Let $f\in L^{1,2}(\Omega')$ be a smooth function. Then the smooth
function $g=f\circ\varphi$ belongs to $L^{1,2}(\Omega)$ because
\begin{multline*}
\|\nabla g\mid L^{2}({\Omega})\|=\biggl(\iint\limits _{{\Omega}}|\nabla(f\circ\varphi(x,y))|^{2}~dxdy\biggr)^{\frac{1}{2}}\\
=\biggl(\iint\limits _{{\Omega}}|\nabla f|^{2}(\varphi(x,y))|\varphi'(x,y))|^{2}~dxdy\biggr)^{\frac{1}{2}}
=\biggl(\iint\limits _{{\Omega}}|\nabla f|^{2}(\varphi(x,y))J_{\varphi}(x,y)~dxdy\biggr)^{\frac{1}{2}}\\
=\biggl(\iint\limits _{\Omega'}|\nabla f|^{2}(u,v)~dudv\biggr)=\|\nabla f\mid L^{2}(\Omega')\|.
\end{multline*}

We used the equality: $|\varphi'(x,y))|^{2}=J_{\varphi}(x,y)$ that is correct for any conformal homeomorphism.

Approximating an arbitrary function $f\in L^{1,2}(\Omega')$
by smooth functions, we obtain an isometry between $L^{1,2}(\Omega')$
and $L^{1,2}(\Omega)$. 
\end{proof}

We define the weighted Sobolev space $W^{1,p}(\Omega,h,1)$, $1\leq p<\infty$,
as the normed space of all locally integrable weakly differentiable functions
$f:\Omega\to\mathbb{R}$ with the finite norm given by
$$
\|f\mid W^{1,p}(\Omega,h,1)\|=\|f\mid L^p(\Omega,h)\|+\|\nabla f\mid L^p(\Omega)\|.
$$

The existence of the Poincar\'e-Sobolev inequalities depends on a conformal (hyperbolic)  geometry of $\Omega$. For any conformal homeomorphism $\varphi: \Omega\to\mathbb D$ define the conformal (hyperbolic) weight $h(x,y):=J_{\varphi}(x,y)$. 

We denote
\begin{multline}
f_{{\Omega}, h}:=\frac{1}{m_h(\Omega)}\iint\limits_{\mathbb D}f(z)h(z)~dxdy=g_{\mathbb D}=\frac{1}{|\mathbb D|}\iint\limits_{\mathbb D} g(w)~dudv,\\
f(z)=g(\varphi(z)), \,\,\, w=\varphi(z).
\nonumber
\end{multline}
Here
$$
m_h(\Omega)=\iint\limits_{\Omega}h(z)~dxdy=\iint\limits_{\Omega}J_{\varphi}(z)~dxdy=\iint\limits_{\mathbb D}~dudv=|\mathbb D|.
$$

The following "universal" weighted Poincar\'e-Sobolev inequality is correct for any simply connected plane domain with non-empty boundary.

\begin{thm}
\label{thm:PoinW12} 
Let $\Omega$ be a simply connected plane domain with non-empty boundary. Then
for any function $f\in W^{1,2}(\Omega,h,1)$ the weighted Poincar\'e-Sobolev inequality 
$$
\biggl(\iint\limits_{\Omega}|f(x,y)-f_{\Omega,h}|^r h(x,y)~dxdy\biggr)^{\frac{1}{r}}\leq B_{r,2}[{\Omega},h]\biggl(\iint\limits_{\Omega}|\nabla f(x,y)|^2~dxdy\biggr)^{\frac{1}{2}}
$$
holds for any $r\geq 1$
with the exact constant $B_{r,2}[\Omega,h]=B_{r,2}[\mathbb D]$ where  $B_{r,2}[\mathbb D]$ is the exact constant of the Poincar\'e inequality in the unit disk 
$$
\biggl(\int\limits_{\mathbb D}|g(x,y)-g_{\mathbb D}|^r~dxdy\biggr)^{\frac{1}{r}}
\leq B_{r,2}[\mathbb D]\biggl(\int\limits_{\mathbb D}|\nabla g(x,y)|^2~dxdy\biggr)^{\frac{1}{2}}.
$$
\end{thm}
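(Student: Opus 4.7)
The whole strategy is pullback: transport the classical (unweighted) Poincar\'e–Sobolev inequality from $\mathbb D$ to $\Omega$ by the conformal homeomorphism $\varphi:\Omega\to\mathbb D$, and observe that the weight $h=J_{\varphi}$ is exactly what is needed so that both sides of the inequality are preserved. Given $f\in W^{1,2}(\Omega,h,1)$, the natural candidate on the disk is $g:=f\circ\varphi^{-1}$. Since $\varphi$ is conformal, $|(\varphi^{-1})'|^{2}=J_{\varphi^{-1}}$, so by a direct change of variables (or by Lemma~\ref{lem:isometry} applied to $\varphi^{-1}$) one has
\[
\|\nabla g\mid L^{2}(\mathbb D)\|=\|\nabla f\mid L^{2}(\Omega)\|,
\]
which establishes that $g\in L^{1,2}(\mathbb D)$ and takes care of the right-hand side.

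For the left-hand side, I would change variables $w=\varphi(z)$, using $J_{\varphi}(z)\,dxdy = du\,dv$, to get
\[
\iint_{\Omega}|f(z)-c|^{r}h(z)\,dxdy
=\iint_{\Omega}|f(z)-c|^{r}J_{\varphi}(z)\,dxdy
=\iint_{\mathbb D}|g(w)-c|^{r}\,du\,dv
\]
for any constant $c$. Taking $c=g_{\mathbb D}=f_{\Omega,h}$ (the equality of these two averages is built into the definition of $f_{\Omega,h}$ via the same change of variables, since $m_{h}(\Omega)=|\mathbb D|$), the left-hand side of the claimed inequality equals exactly $\|g-g_{\mathbb D}\mid L^{r}(\mathbb D)\|$. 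Applying the Poincar\'e–Sobolev inequality on the disk with its sharp constant $B_{r,2}[\mathbb D]$ and then invoking the gradient identity above yields the desired inequality with constant at most $B_{r,2}[\mathbb D]$.

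Sharpness follows from the fact that the correspondence $g\mapsto f=g\circ\varphi$ is a bijection between $W^{1,2}(\mathbb D)$ and the subspace of $W^{1,2}(\Omega,h,1)$ reached by pullback, and it preserves both sides of the inequality identically. Hence any extremal sequence for $B_{r,2}[\mathbb D]$ in $\mathbb D$ pulls back to an extremal sequence for the weighted inequality on $(\Omega,h)$, so the best constant on $(\Omega,h)$ cannot be strictly less than $B_{r,2}[\mathbb D]$; combined with the upper bound this gives the exact equality $B_{r,2}[\Omega,h]=B_{r,2}[\mathbb D]$.

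The main technical point to be careful with is the functional-analytic justification of the pullback: one must verify that the composition $f\mapsto f\circ\varphi^{-1}$ really sends $W^{1,2}(\Omega,h,1)$ into $W^{1,2}(\mathbb D)$ (and the chain rule applies in the weak sense), which is where the hypothesis that $h=J_{\varphi}$ is essential and where a smooth approximation argument inside the conformally equivalent domains is needed. Once this density/chain-rule step is in place, the rest is a direct change of variable in Lebesgue integrals. I would handle the approximation exactly as in Lemma~\ref{lem:isometry}: work first with smooth $f$, use the conformality identity $|\varphi'|^{2}=J_{\varphi}$ to get pointwise identities for $|\nabla(f\circ\varphi^{-1})|^{2}J_{\varphi^{-1}}$, and then pass to the limit in both the weighted $L^{r}$ norm on the left and the unweighted $L^{2}$ norm of the gradient on the right.
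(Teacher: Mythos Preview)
Your proposal is correct and follows essentially the same route as the paper: pull back via the Riemann map $\varphi:\Omega\to\mathbb D$, use the change of variables $h(z)\,dxdy=du\,dv$ to identify the weighted $L^r$ norm on $\Omega$ with the unweighted one on $\mathbb D$, and invoke Lemma~\ref{lem:isometry} for the Dirichlet energy, then apply the Poincar\'e--Sobolev inequality on $\mathbb D$. You in fact go slightly further than the paper's own argument by spelling out the sharpness direction (pulling back an extremal sequence), which the paper asserts in the statement but does not write out in the proof.
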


\begin{proof}
Let $r\geq 1$. By the Riemann Mapping Theorem there exists a conformal homeomorphism $\varphi: \Omega\to\mathbb D$. Using the change of variable formula for conformal homeomorphism,  the Poincar\'e-Sobolev inequality in the unit disc $\mathbb D\subset\mathbb R^2$ and Lemma \ref{lem:isometry} we get

\begin{multline}
\biggl(\iint\limits_{\Omega}|f(x,y)-f_{\Omega,h}|^r h(x,y)~dxdy\biggr)^{\frac{1}{r}}=\biggl(\iint\limits_{\Omega}|f(x,y)-g_{\mathbb D}|^r h(x,y)~dxdy\biggr)^{\frac{1}{r}} 
\\
=\biggl(\iint\limits_{\Omega}|f(x,y)-g_{\mathbb D}|^r J_{\varphi}(x,y)~dxdy\biggr)^{\frac{1}{r}}=
\biggl(\iint\limits_{\mathbb D}|g(u,v)-g_{\mathbb D}|^r ~dudv\biggr)^{\frac{1}{r}}\\
\leq B_{r,2}[\mathbb D]\left(\iint\limits_{\mathbb D}|\nabla g(u,v)|^2~dudv\right)^{\frac{1}{2}}
= B_{r,2}[\mathbb D]\left(\iint\limits_{\Omega}|\nabla f(x,y)|^2~dxdy\right)^{\frac{1}{2}}.
\nonumber
\end{multline}
for any function $f\in W^{1,2}(\Omega,h,1)$.
\end{proof}

Let us estimate of $B_{r,2}[\mathbb D]$ using the following $n$-dimensional inequalities \cite{GT}. For any $\mu\in(0,1)$ and any domain $\Omega\subset\mathbb R^n$ define the operator $V_{\mu}$ acting on $L^1(\Omega)$ by the expression
$$
\left(V_{\mu}f\right)(x)=\int\limits_{\Omega}|x-y|^{n(\mu-1)}f(y)~dy.
$$
Here $x=(x_1,...,x_n)$, $y=(y_1,...,y_n)$ and $dy=dy_1...dy_n$.

\begin{lem}\cite{GT}
\label{lem:GT1}
The operator $V_{\mu}$ maps $L^p(\Omega)$ continuously into $L^q(\Omega)$ for any $q$, $1\leq q\leq\infty$ satisfying
$$
0\leq \delta=\delta(p,q)=p^{-1}-q^{-1}<\mu.
$$
Furthermore, for any $f\in L^p(\Omega)$,
$$
\|V_{\mu}f\mid L^q(\Omega)\|\leq \left(\frac{1-\delta}{\mu-\delta}\right)^{1-\delta}\omega_n^{1-\mu}|\Omega|^{\mu-\delta}\|f\mid L^p(\Omega)\|.
$$
Here $\omega_n=\frac{2\pi^{n/2}}{n\Gamma(n/2)}$ is the volume of the unit ball in $\mathbb R^n$.
\end{lem}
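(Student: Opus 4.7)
The plan is to prove this as a quantitative form of Young's convolution inequality adapted to the bounded domain $\Omega$. Writing $V_\mu f(x) = \int_\Omega K(x,y) f(y)\,dy$ with $K(x,y) := |x-y|^{n(\mu-1)}$, the Young exponent $h$ defined by $1/h + 1/p = 1 + 1/q$ satisfies $h = 1/(1-\delta)$. The hypothesis $\delta < \mu$ translates exactly to $(1-\mu)h < 1$, which is the local integrability threshold for $y \mapsto K(x,y)^h$ near the diagonal.

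The first step is a three-exponent H\"older inequality. One writes
$$K(x,y)|f(y)| = \bigl[K(x,y)^h |f(y)|^p\bigr]^{1/q} \cdot K(x,y)^{1-h/q} \cdot |f(y)|^{1-p/q}$$
and applies H\"older in the variable $y$ with exponents $q$, $q' = hq/(q-h)$, $q'' = pq/(q-p)$; their reciprocals sum to $1$ precisely because $1/h = 1-\delta$. Raising the resulting pointwise inequality to the $q$-th power and integrating over $x \in \Omega$, one uses Fubini on the cross-term $\iint K(x,y)^h |f(y)|^p\,dy\,dx$ together with the uniform supremum bound on the remaining factor $(\int_\Omega K(x,y)^h dy)^{q/h - 1}$ to arrive at
$$\|V_\mu f\|_{L^q(\Omega)} \leq \Bigl(\sup_{x \in \Omega} \int_\Omega K(x,y)^h\,dy\Bigr)^{1/h} \|f\|_{L^p(\Omega)}.$$

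The second step is to evaluate that supremum explicitly. Since the integrand $y \mapsto |x-y|^{n(\mu-1)h}$ is radially decreasing about $x$, the bathtub/rearrangement principle implies that replacing $\Omega$ by the ball $B_R(x)$ of equal volume $\omega_n R^n = |\Omega|$ can only increase the integral. A polar-coordinate computation, valid exactly under $(1-\mu)h < 1$, gives
$$\int_{B_R(x)} |x-y|^{n(\mu-1)h}\,dy = \frac{\omega_n R^{n(1-(1-\mu)h)}}{1-(1-\mu)h}.$$
Substituting $R = (|\Omega|/\omega_n)^{1/n}$ and simplifying with the identities $(1-\mu)h = (1-\mu)/(1-\delta)$ and $1-(1-\mu)h = (\mu-\delta)/(1-\delta)$, then taking the $(1-\delta)$-th power to pass from the $L^h$-norm raised to the $h$ back to the $L^h$-norm itself, reassembles exactly the stated constant $\bigl((1-\delta)/(\mu-\delta)\bigr)^{1-\delta}\omega_n^{1-\mu}|\Omega|^{\mu-\delta}$.

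The main obstacle here is bookkeeping rather than conceptual. Extracting the sharp constant requires the three-factor H\"older decomposition above rather than a naive two-factor split, and the algebraic identities relating $h$, $\delta$, and $\mu$ must be tracked carefully through both the radial integration and the final exponentiation. The degenerate case $\delta = 0$ (where $q = p$ and the H\"older step collapses to a Minkowski integral inequality) and the limit $\delta \to \mu$ (where the factor $(\mu-\delta)^{-(1-\delta)}$ diverges, reflecting the borderline breakdown of Young's condition) furnish natural sanity checks.
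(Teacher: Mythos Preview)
The paper does not actually supply a proof of this lemma: it is quoted verbatim from Gilbarg--Trudinger with the citation \cite{GT} and then used as a black box in Proposition~\ref{EstCon}. So there is no ``paper's own proof'' to compare against.

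Your argument is correct and is in fact the classical proof (Gilbarg--Trudinger, Lemma~7.12). The three-exponent H\"older split with $h=1/(1-\delta)$, the Fubini step that converts one of the resulting factors into $\sup_x\int_\Omega K(x,y)^h\,dy$, and the radial rearrangement bound replacing $\Omega$ by a ball of the same volume are exactly the ingredients used there, and your exponent bookkeeping checks out: $M^{1/h}=M^{1-\delta}$ produces precisely $\bigl(\tfrac{1-\delta}{\mu-\delta}\bigr)^{1-\delta}\omega_n^{1-\mu}|\Omega|^{\mu-\delta}$. One small remark: the endpoint $q=\infty$ (where $q''$ and the third H\"older factor degenerate) is not covered by the three-factor split as written and needs the obvious two-factor H\"older $|V_\mu f(x)|\le \|K(x,\cdot)\|_{p'}\|f\|_p$ instead; you may want to mention this explicitly since the lemma allows $q=\infty$.
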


In the convex domains there are the following point-wise estimates

\begin{lem}\cite{GT}
\label{GT2}
Let $\Omega$ be a convex domain and $f\in W^{1,p}(\Omega)$. Then
$$
|f(x)-f_{\Omega}|\leq \frac{d^n}{n|\Omega|}\int\limits_{\Omega}|x-y|^{1-n}|\nabla f(y)|~dy\,\,\,\text{a.~e. in}\,\,\,\Omega
$$
where $d$ is the diameter of $\Omega$.
\end{lem}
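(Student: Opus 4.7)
The plan is to derive this pointwise bound by the classical integral representation: run a ray out from $x$, invoke the fundamental theorem of calculus along it, average the resulting identity over $y\in\Omega$, then convert the averaged radial integral back into an integral over $\Omega$. First I would reduce to the case $f\in C^{1}(\overline{\Omega})$ by a standard density argument; this is legitimate since convex domains are Lipschitz and so $C^{\infty}(\overline{\Omega})$ is dense in $W^{1,p}(\Omega)$, and because both sides of the inequality are continuous in $f$ with respect to the $W^{1,p}$ norm (the right-hand side defines a Riesz potential of $|\nabla f|$, which is controlled by the $L^p$ norm by Lemma~\ref{lem:GT1}).

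Next, fix $x\in\Omega$. For every $y\in\Omega$, convexity guarantees that the segment from $x$ to $y$ lies in $\Omega$, so with $\omega=(y-x)/|y-x|$ and $\rho=|y-x|$ the fundamental theorem of calculus gives $f(y)-f(x)=\int_{0}^{\rho}\partial_{r}f(x+r\omega)\,dr$. Integrating this identity over $y\in\Omega$ and passing to polar coordinates centered at $x$ yields
\[
f_{\Omega}-f(x)=\frac{1}{|\Omega|}\int_{S^{n-1}}\int_{0}^{d(x,\omega)}\int_{0}^{r}\partial_{s}f(x+s\omega)\,ds\,r^{n-1}\,dr\,d\omega,
\]
where $d(x,\omega)$ denotes the distance from $x$ to $\partial\Omega$ in direction $\omega$; here convexity is used again to guarantee that the ray $\{x+r\omega:r\ge 0\}$ meets $\Omega$ in the single interval $[0,d(x,\omega)]$, so that the polar change of variable on $\Omega$ has no extra pieces.

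The decisive step is then a Fubini exchange: swapping the order of $r$ and $s$ integration, the inner $r$-integral becomes $\int_{s}^{d(x,\omega)}r^{n-1}\,dr=\frac{d(x,\omega)^{n}-s^{n}}{n}\leq\frac{d^{n}}{n}$ since $d(x,\omega)\leq d=\operatorname{diam}\Omega$. Estimating $|\partial_{s}f|\leq|\nabla f|$ and using $ds\,d\omega=s^{1-n}\,dy$ to undo the polar change of variable, we recover
\[
|f(x)-f_{\Omega}|\leq\frac{d^{n}}{n|\Omega|}\int_{\Omega}|x-y|^{1-n}|\nabla f(y)|\,dy,
\]
which is the asserted inequality; passing from smooth $f$ to a general $W^{1,p}$ function by density finishes the argument a.e.

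The only real obstacle I anticipate is the book-keeping around the polar change of variable: one must know that for a convex domain the star-shaped parametrization $y\mapsto(r,\omega)=(|y-x|,(y-x)/|y-x|)$ with $r\in[0,d(x,\omega)]$ covers $\Omega$ exactly once for a.e. $x$, and that $d(x,\omega)$ is measurable on $S^{n-1}$. Both facts are elementary for convex $\Omega$, but without convexity either the segment joining $x$ to $y$ can leave $\Omega$ (destroying the fundamental theorem step) or the ray in direction $\omega$ can re-enter (destroying the Fubini step and the diameter bound), which is exactly why the hypothesis is essential.
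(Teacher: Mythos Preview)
Your argument is correct and is precisely the classical proof from Gilbarg--Trudinger (Lemma~7.16 there): the representation via the fundamental theorem of calculus along segments, averaging in $y$, the polar change of variables, the Fubini swap, and the diameter bound are all carried out exactly as in that reference. The paper itself does not supply a proof of this lemma; it simply cites \cite{GT}, so there is nothing further to compare.
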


From these two lemmas follows

\begin{prop}
\label{EstCon}
Let $\Omega$ be a convex domain and $f\in W^{1,p}(\Omega)$. Then
\begin{multline}
\left(\int\limits_{\Omega}|f(x)-f_{\Omega}|^q~dx\right)^{\frac{1}{q}}
\leq
\frac{d^n}{n|\Omega|}\left(\frac{1-\frac{1}{p}+\frac{1}{q}}{\frac{1}{n}-\frac{1}{p}+\frac{1}{q}}\right)^{1-\frac{1}{p}+\frac{1}{q}}\omega_n^{1-\frac{1}{n}}|\Omega|^{\frac{1}{n}-\frac{1}{p}+\frac{1}{q}}\||\nabla f| \mid L^p(\Omega)\|.
\nonumber
\end{multline}
\end{prop}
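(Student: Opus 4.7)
The plan is to view the statement as a direct combination of Lemma \ref{GT2} and Lemma \ref{lem:GT1}: the first gives a pointwise bound of $|f(x)-f_\Omega|$ by a Riesz-type potential applied to $|\nabla f|$, and the second gives the exact $L^p \to L^q$ mapping estimate for such potentials with explicit constant.

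First I would rewrite the pointwise estimate from Lemma \ref{GT2} in the operator form
\[
|f(x)-f_{\Omega}| \leq \frac{d^n}{n|\Omega|}\bigl(V_{\mu}|\nabla f|\bigr)(x) \quad \text{a.e. in } \Omega,
\]
where $V_\mu$ is the integral operator from Lemma \ref{lem:GT1}. To make the kernels match I need $n(\mu-1) = 1-n$, which forces the choice $\mu = 1/n$. Note $1/n \in (0,1)$ for $n \geq 2$, so the hypothesis of Lemma \ref{lem:GT1} on $\mu$ is satisfied.

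Next I would take $L^q(\Omega)$ norms of both sides and apply Lemma \ref{lem:GT1} with this $\mu = 1/n$ and with $f$ replaced by $|\nabla f| \in L^p(\Omega)$. Setting $\delta = 1/p - 1/q$, the hypothesis $0 \le \delta < \mu$ amounts to $1/p - 1/q < 1/n$, which is exactly the implicit assumption under which the claimed exponent $\mu - \delta = 1/n - 1/p + 1/q$ is positive. With these identifications, $1-\delta = 1 - 1/p + 1/q$ and $1 - \mu = 1 - 1/n$, so Lemma \ref{lem:GT1} yields
\[
\bigl\|V_{\mu}|\nabla f| \mid L^q(\Omega)\bigr\| \leq \left(\frac{1-\frac{1}{p}+\frac{1}{q}}{\frac{1}{n}-\frac{1}{p}+\frac{1}{q}}\right)^{1-\frac{1}{p}+\frac{1}{q}} \omega_n^{1-\frac{1}{n}}|\Omega|^{\frac{1}{n}-\frac{1}{p}+\frac{1}{q}}\bigl\||\nabla f| \mid L^p(\Omega)\bigr\|.
\]
Multiplying by the prefactor $d^n/(n|\Omega|)$ coming from Lemma \ref{GT2} gives the asserted inequality.

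There is no real obstacle here; the whole argument is a bookkeeping exercise once the correct exponent $\mu = 1/n$ is identified. The only point that requires a moment of care is matching the parameters: verifying that the kernel in Lemma \ref{GT2} corresponds to $\mu = 1/n$ in the convention of Lemma \ref{lem:GT1}, and checking that the admissible range of $(p,q)$ in the conclusion (i.e.\ the implicit constraint $\frac{1}{n} - \frac{1}{p} + \frac{1}{q} > 0$) is precisely the condition $\delta < \mu$ required by Lemma \ref{lem:GT1}.
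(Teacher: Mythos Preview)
Your proposal is correct and follows essentially the same approach as the paper: choose $\mu = 1/n$, apply Lemma~\ref{GT2} to bound $|f(x)-f_\Omega|$ pointwise by $\frac{d^n}{n|\Omega|}V_{1/n}|\nabla f|$, take $L^q$ norms, and invoke Lemma~\ref{lem:GT1} with $\delta = 1/p - 1/q$ to obtain the explicit constant. The paper's proof is in fact slightly terser than yours, omitting the parameter-matching discussion you include.
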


\begin{proof}
We take $\mu=1/n$. Then for function $f\in W^{1,p}(\Omega)$ we have
\begin{multline}
\left(\int\limits_{\Omega}|f(x)-f_{\Omega}|^q~dx\right)^{\frac{1}{q}}\\\leq 
\frac{d^n}{n|\Omega|}\left(\int\limits_{\Omega}\left|\int\limits_{\Omega}|x-y|^{1-n}|\nabla f(y)|~dy\right|^q~dx\right)^{\frac{1}{q}}
=
\frac{d^n}{n|\Omega|}\|V_{\frac{1}{n}} |\nabla f| \mid L^q(\Omega)\|\\
\leq
\frac{d^n}{n|\Omega|}\left(\frac{1-1/p+1/q}{1/n-1/p+1/q}\right)^{1-1/p+1/q}\omega_n^{1-1/n}|\Omega|^{1/n-1/p+1/q}\||\nabla f \mid L^p(\Omega)\|.
\nonumber
\end{multline}
\end{proof}

Proposition \ref{EstCon} give us an upper the estimate of the constant in the weighted Poincar\'e-Sobolev inequality in any simply connected plane domain with non-empty boundary.
$$
B_{r,2}[\Omega,h]=B_{r,2}[\mathbb D]\leq 2\pi^{\frac{2-r}{2r}}\left({(r+2)}/{2}\right)^{\frac{r+2}{2r}}.
$$

\vskip 1cm

We are ready to prove the main technical result of this work:

\begin{thm}
\label{thm:comembd} 
If $\Omega$ is a conformal $\alpha$-regular domain then:

1) The embedding operator
\[
i:W^{1,2}(\Omega)\hookrightarrow L^{s}(\Omega),\,\,\,
\]
is compact for any $s\geq 1$.

2) For any function $f\in W^{1,2}(\Omega)$ the Poincar\'e-Sobolev inequality
$$
\inf\limits_{c\in \mathbb R}\biggl(\int\limits_{\Omega}|f(x,y)-f_{\Omega}|^s~dxdy\biggr)^{\frac{1}{s}}\leq B_{s,2}[\Omega]\biggl(\int\limits_{\Omega}|\nabla f(x,y)|^2~dxdy\biggr)^{\frac{1}{2}}
$$
holds for any $s\geq 1$.

3) The following estimate is correct $B_{s,2}[\Omega]\leq B_{r,2}[\mathbb D]\cdot \|\psi'|L^{\alpha}(\mathbb D)\|^{\frac{2}{s}}$. Here $B_{r,2}[\mathbb D]$ is the exact constant for the Poincar\'e inequality in the unit disc, $r=\alpha s/(\alpha-2)$. 

\end{thm}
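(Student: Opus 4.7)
The plan is to establish parts (2)--(3) by combining Theorem \ref{thm:PoinW12} with a Hölder estimate that strips the weight $h$, and then to deduce the compactness in (1) by pulling back to the unit disc and applying the classical Rellich--Kondrachov theorem.

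For (2)--(3), fix $s \in [1,\infty)$ and set $r = \alpha s/(\alpha-2)$, so $r > s$. Writing $|f-c|^{s} = (|f-c|^{s} h^{s/r})\cdot h^{-s/r}$ and applying Hölder with conjugate exponents $r/s$ and $r/(r-s)$ yields
\[
\iint\limits_\Omega |f-c|^{s}\, dxdy \leq \biggl(\iint\limits_\Omega |f-c|^{r} h\, dxdy\biggr)^{s/r} \biggl(\iint\limits_\Omega h^{-\frac{s}{r-s}}\, dxdy\biggr)^{\frac{r-s}{r}}.
\]
The change of variables $z=\psi(w)$ in the second factor, using $h(\psi(w))=1/J_\psi(w)$ and $dxdy=J_\psi(w)\, dudv$, transforms it into $\iint_{\mathbb{D}} J_\psi(w)^{r/(r-s)}\, dudv = \iint_{\mathbb{D}} |\psi'(w)|^{\alpha}\, dudv$, because the definition $r = \alpha s/(\alpha-2)$ is arranged precisely so that $2r/(r-s)=\alpha$ and $(r-s)/(rs)=2/(\alpha s)$. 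Taking $c=f_{\Omega,h}$ and invoking Theorem \ref{thm:PoinW12} on the first factor simultaneously produces the Poincaré--Sobolev inequality of (2) and the bound $B_{s,2}[\Omega]\leq B_{r,2}[\mathbb{D}]\cdot\|\psi'\mid L^{\alpha}(\mathbb{D})\|^{2/s}$ of (3).

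For the compactness in (1), let $\{f_n\}$ be bounded in $W^{1,2}(\Omega)$ and set $g_n=f_n\circ\psi$. Lemma \ref{lem:isometry} gives $\|\nabla g_n\mid L^{2}(\mathbb{D})\|=\|\nabla f_n\mid L^{2}(\Omega)\|$, so the centred functions $\tilde g_n:=g_n-(g_n)_{\mathbb{D}}$ are bounded in $W^{1,2}(\mathbb{D})$ by the Poincaré inequality on the disc. The classical Rellich--Kondrachov theorem on $\mathbb{D}$ extracts a subsequence with $\tilde g_n\to\tilde g$ in $L^{r}(\mathbb{D})$ for every finite $r$. Repeating the Hölder estimate above for the difference $(f_n-(g_n)_{\mathbb{D}})-(f_m-(g_m)_{\mathbb{D}})=(\tilde g_n-\tilde g_m)\circ\varphi$ shows that $\{f_n-(g_n)_{\mathbb{D}}\}$ is Cauchy in $L^{s}(\Omega)$. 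The scalars $(g_n)_{\mathbb{D}}$ are bounded: part (2), already proved, controls $\|f_n-(g_n)_{\mathbb{D}}\mid L^{s}(\Omega)\|$ via $\|\nabla f_n\mid L^{2}(\Omega)\|$, while $\{f_n\}$ itself is bounded in $L^{s}(\Omega)\subset L^{1}(\Omega)$ (because $\Omega$ has finite area), so the triangle inequality applied to $\|(g_n)_{\mathbb{D}}\mid L^{1}(\Omega)\|$ bounds the constants. A further subsequence makes $(g_n)_{\mathbb{D}}$ convergent, so $f_n$ converges in $L^{s}(\Omega)$.

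The main technical point is the exponent bookkeeping that forces $r=\alpha s/(\alpha-2)$ and uses the hypothesis $\alpha>2$ to make the post-change-of-variables integrand read exactly as $|\psi'|^{\alpha}$; once this Hölder reduction is arranged, everything else is a routine combination of Lemma \ref{lem:isometry}, Theorem \ref{thm:PoinW12}, and Rellich--Kondrachov on the disc.
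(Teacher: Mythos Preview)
Your argument for (2)--(3) is essentially identical to the paper's: H\"older with exponents $r/s$ and $r/(r-s)$, a change of variables turning the weight factor into $\iint_{\mathbb D}|\psi'|^{\alpha}$, and then Theorem~\ref{thm:PoinW12}.

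For (1) your route differs from the paper's. The paper does not run a sequential Rellich--Kondrachov argument; instead it factors the embedding as a composition of three operators,
\[
W^{1,2}(\Omega)\xrightarrow{(\varphi^{-1})^{\ast}} W^{1,2}(\mathbb D)\xrightarrow{i_{\mathbb D}} L^{r}(\mathbb D)\xrightarrow{\varphi^{\ast}} L^{s}(\Omega),
\]
and invokes Theorem~\ref{thm:boundWW} to get the first arrow bounded in the full $W^{1,2}$ norms (Theorem~\ref{thm:LpLq} gives the third). Compactness is then just ``bounded $\circ$ compact $\circ$ bounded''. Your approach avoids Theorem~\ref{thm:boundWW} altogether by using only the gradient isometry (Lemma~\ref{lem:isometry}) and centring, which is more elementary; the paper's factorization is cleaner and fits into its general composition-operator framework.

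One small slip in your compactness paragraph: you assert that $\{f_n\}$ is bounded in $L^{s}(\Omega)$, but for $s>2$ this is not immediate from the hypothesis $\{f_n\}$ bounded in $W^{1,2}(\Omega)$ without the embedding you are proving. Fortunately you only use this to get an $L^{1}(\Omega)$ bound, and that follows directly from $L^{2}(\Omega)\subset L^{1}(\Omega)$ on the finite-area domain; replace ``bounded in $L^{s}(\Omega)\subset L^{1}(\Omega)$'' by ``bounded in $L^{2}(\Omega)\subset L^{1}(\Omega)$'' and the argument goes through.
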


\begin{proof}
Let $s\geq1$. Since $\Omega$ is a conformal $\alpha$-regular domain then
 for any conformal homeomorphism $\varphi:\Omega\to\mathbb D$ its inverse conformal homeomorphism $\psi=\varphi^{-1}$ satisfy to the following condition of $\alpha$-regularity
\[
\iint\limits _{\mathbb D}|\psi'(u,v)|^{\alpha}~dudv=
\iint\limits _{\mathbb D }|J_{\varphi^{-1}}(u,v)|^{\alpha/2}~dudv<\infty.
\]
For the unit disc $\mathbb D$ the embedding
operator 
\[
i_{D}:W^{1,2}(\mathbb D)\hookrightarrow L^{r}(\mathbb D)
\]
 is compact (see, for example, \cite{M}) for any $r\geq1$.
 
 By Theorem~\ref{thm:LpLq} the composition
operator 
\[
\varphi^{\ast}:L^{r}(\mathbb D)\to L^{s}(\Omega)
\]
 is bounded if 
\[
\iint\limits_{\mathbb D}|J_{\varphi^{-1}}(u,v)|^{\frac{r}{r-s}}~dudv<\infty.
\]
Because $\Omega$ is a conformal $\alpha$-regular domain this condition holds for $\frac{r}{r-s}=\alpha/2$ i.e for $r=s\alpha/(\alpha-2)$.

Since a conformal homeomorphism $\varphi^{-1}$ induces a bounded composition
operator 
\[
(\varphi^{-1})^{\ast}:L^{1,2}(\Omega)\to L^{1,2}(\mathbb D)
\]
 then by Theorem~\ref{thm:boundWW} the composition
operator 
\[
(\varphi^{-1})^{\ast}:W^{1,2}(\Omega)\to W^{1,2}(\mathbb D)
\]
 is bounded. 

Therefore the imbedding operator 
\[
i_{\Omega}:W^{1,2}(\Omega)\hookrightarrow L^{s}(\Omega)
\]
is compact as a composition of bounded composition operators $\varphi^{\ast}$,
$(\varphi^{-1})^{\ast}$ and the compact embedding operator $i_{D}$
\[
i_{D}:W^{1,2}(D)\hookrightarrow L^{r}(\mathbb D).
\]

The first part of this theorem is proved. 

For any function  $f\in W^{1,2}(\Omega)$ and $r=s\alpha/(\alpha-2)$ the following inequalities are correct:

\begin{multline}
\inf\limits_{c\in\mathbb R}\biggl(\iint\limits_{\Omega}|f(x,y)-c|^s~dxdy\biggr)^{\frac{1}{s}}\leq \biggl(\iint\limits_{\Omega}|f(x,y)-f_{\Omega,h}|^s~dxdy\biggr)^{\frac{1}{s}}\\
\leq\left(\iint\limits_{\Omega}|J_{\varphi}(x,y)|^{-\frac{s}{r-s}}~dxdy\right)^{\frac{r-s}{rs}}\cdot
\biggl(\iint\limits_{\Omega}|f(x,y)-f_{\Omega,h}|^r h(x,y)~dxdy\biggr)^{\frac{1}{r}}\\
=
\left(\iint\limits_{\mathbb D}|J_{\varphi^{-1}}(u,v)|^{\frac{r}{r-s}}~dudv\right)^{\frac{r-s}{rs}}\cdot
\biggl(\iint\limits_{\Omega}|f(x,y)-f_{\Omega,h}|^r h(x,y)~dxdy\biggr)^{\frac{1}{r}} \\
=
\left(\iint\limits_{\mathbb D}|J_{\varphi^{-1}}(u,v)|^{\frac{\alpha}{2}}~dudv\right)^{\frac{2}{\alpha s}}\cdot
\biggl(\iint\limits_{\Omega}|f(x,y)-f_{\Omega,h}|^r h(x,y)~dxdy\biggr)^{\frac{1}{r}}.
\end{multline}

Using Theorem \ref{thm:PoinW12} we obtain

$$
\inf\limits_{c\in\mathbb R}\biggl(\iint\limits_{\Omega}|f(x,y)-f_{\Omega}|^s~dxdy\biggr)^{\frac{1}{s}}\leq B_{r,2}[\mathbb D]\cdot \|\psi'|L^{\alpha}(\mathbb D)\|^{\frac{2}{s}}\biggl(\iint\limits_{\Omega}|\nabla f(x,y)|^2~dxdy\biggr)^{\frac{1}{2}}.
$$
\end{proof}

For $\alpha=\infty$ the following analog of the previous theorem is correct:

\begin{thm}
\label{thm:infembd} 
If $\Omega$ is a conformal $\infty$-regular domain then:

1) The embedding operator
\[
i:W^{1,2}(\Omega)\hookrightarrow L^{2}(\Omega),\,\,\,
\]
is compact.

2) For any function $f\in W^{1,2}(\Omega)$ the Poincar\'e-Sobolev inequality
$$
\biggl(\iint\limits_{\Omega}|f(x,y)-f_{\Omega}|^2~dxdy\biggr)^{\frac{1}{2}}\leq B_{2,2}[\Omega]\biggl(\iint\limits_{\Omega}|\nabla f(x,y)|^2~dxdy\biggr)^{\frac{1}{2}}
$$
holds.

3) The following estimate is correct $B_{2,2}[\Omega]\leq B_{2,2}[\mathbb D]\cdot \|\psi'|L^{\infty}(\mathbb D)\|$. 
Here $B_{2,2}[\mathbb D]=1/\sqrt{\lambda_1[\Omega]}$ is the exact constant for the Poincar\'e inequality in the unit disk. 
\end{thm}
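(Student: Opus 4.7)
The plan is to mimic the proof of Theorem \ref{thm:comembd}, treating the case $\alpha=\infty$ as the formal limit where the exponent relation $r=s\alpha/(\alpha-2)$ degenerates to $r=s$; accordingly we fix $s=r=2$ throughout. The key observation enabling everything is that $\infty$-regularity of $\Omega$ is precisely the statement that the derivative $\psi'$ of the Riemann map $\psi=\varphi^{-1}\colon\mathbb D\to\Omega$ is essentially bounded, which by the conformal identity $J_{\varphi^{-1}}=|\psi'|^2$ means $\operatorname{esssup}_{\mathbb D} J_{\varphi^{-1}}=\|\psi'\mid L^\infty(\mathbb D)\|^2<\infty$.

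For compactness, I factor the embedding exactly as in the diagram of Section 2:
\[
W^{1,2}(\Omega)\xrightarrow{(\varphi^{-1})^{\ast}} W^{1,2}(\mathbb D)\xrightarrow{i_{\mathbb D}} L^{2}(\mathbb D)\xrightarrow{\varphi^{\ast}} L^{2}(\Omega).
\]
The first arrow is bounded: Lemma \ref{lem:isometry} makes $(\varphi^{-1})^{\ast}$ an isometry $L^{1,2}(\Omega)\to L^{1,2}(\mathbb D)$, and together with the third arrow being bounded it furnishes the hypotheses of Theorem \ref{thm:boundWW}, which upgrades it to a bounded map on $W^{1,2}$. The middle arrow is the classical Rellich compactness on the disc. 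The third arrow is bounded by the $s=r$ clause of Theorem \ref{thm:LpLq} since $(\operatorname{esssup}_{\mathbb D} J_{\varphi^{-1}})^{1/2}=\|\psi'\mid L^\infty(\mathbb D)\|<\infty$. A composition of bounded operators with a compact middle factor is compact, giving part (1).

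For parts (2) and (3), I would apply Theorem \ref{thm:PoinW12} with $r=2$ to pass to the weighted norm, and then strip the weight using the pointwise bound $h(x,y)^{-1}=|\psi'(\varphi(x,y))|^{2}\le \|\psi'\mid L^\infty(\mathbb D)\|^{2}$ a.e.\ on $\Omega$. Explicitly,
\[
\biggl(\iint\limits_{\Omega}|f-f_{\Omega,h}|^{2}\,dxdy\biggr)^{\frac{1}{2}}
\le \|\psi'\mid L^\infty(\mathbb D)\|\cdot\biggl(\iint\limits_{\Omega}|f-f_{\Omega,h}|^{2}\,h(x,y)\,dxdy\biggr)^{\frac{1}{2}},
\]
and the right-hand factor is controlled by $B_{2,2}[\mathbb D]\bigl(\iint_{\Omega}|\nabla f|^{2}\bigr)^{1/2}$ via Theorem \ref{thm:PoinW12}. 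Since for $s=2$ the infimum over constants on the left is achieved at the standard mean $f_{\Omega}$ rather than at $f_{\Omega,h}$, replacing $f_{\Omega,h}$ by $f_{\Omega}$ can only decrease the left-hand side, yielding the stated inequality with $B_{2,2}[\Omega]\le B_{2,2}[\mathbb D]\cdot\|\psi'\mid L^\infty(\mathbb D)\|$.

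The main subtlety is not analytic but bookkeeping: I need to confirm that the $\|h^{-1}\|_{L^\infty(\Omega)}$ factor which appears when de-weighting the Poincaré-Sobolev inequality is exactly $\|\psi'\mid L^\infty(\mathbb D)\|^{2}$, which follows from the identity $J_\varphi(x,y)\cdot J_\psi(\varphi(x,y))=1$ combined with $J_\psi=|\psi'|^2$. No new estimates of constants on the disc are needed, and no compactness argument beyond the classical Rellich theorem on $\mathbb D$ is invoked, so the proof reduces essentially to checking that the chain of bounded/compact operators above is well-defined in the $\alpha=\infty$ regime.
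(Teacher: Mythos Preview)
Your proposal is correct and follows essentially the same route as the paper's own proof. The paper likewise factors the embedding as $(\varphi^{-1})^{\ast}\circ i_{\mathbb D}\circ\varphi^{\ast}$ using Theorem~\ref{thm:LpLq} (the $s=r$ clause) and Theorem~\ref{thm:boundWW}, and for the constant it also passes from $\|f-f_\Omega\|_{L^2(\Omega)}$ to $\|f-f_{\Omega,h}\|_{L^2(\Omega)}$ via the $L^2$-minimality of the mean, inserts $h^{-1}h$, and pulls out $\|h^{-1}\|_{L^\infty(\Omega)}^{1/2}=\|J_{\varphi^{-1}}\|_{L^\infty(\mathbb D)}^{1/2}=\|\psi'\|_{L^\infty(\mathbb D)}$; the only cosmetic difference is that the paper writes out the change of variables and the disc Poincar\'e inequality explicitly rather than invoking Theorem~\ref{thm:PoinW12}.
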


\begin{proof}
Since $\Omega$ is a conformal $\infty$-regular domain then
 for any conformal homeomorphism $\varphi:\Omega\to\mathbb D$ its inverse conformal homeomorphism $\psi=\varphi^{-1}$ satisfy to the following condition:
 
\[ 
\|\psi' \mid L^{\infty} (\mathbb D)\|^2=\|J_{\varphi^{-1}} \mid L^{\infty} (\mathbb D)\|<\infty.
\]
For the unit disc $D$ the embedding
operator 
\[
i_{D}:W^{1,2}(\mathbb D)\hookrightarrow L^{2}(\mathbb D)
\]
 is compact (see, for example, \cite{M}).
 
By Theorem~\ref{thm:LpLq} the composition
operator 
\[
\varphi^{\ast}:L^{2}(D)\to L^{2}(\Omega)
\]
 is bounded if 
\[ 
\|\psi' \mid L^{\infty} (\mathbb D)\|^2=\|J_{\varphi^{-1}} \mid L^{\infty} (\mathbb D)\|<\infty.
\]

Since a conformal homeomorphism $\varphi^{-1}$ induces a bounded composition
operator 
\[
(\varphi^{-1})^{\ast}:L^{1,2}(\Omega)\to L^{1,2}(D)
\]
 then by Theorem~\ref{thm:boundWW} the composition
operator 
\[
(\varphi^{-1})^{\ast}:W^{1,2}(\Omega)\to W^{1,2}(D)
\]
 is bounded. 

Therefore the embedding operator 
\[
i_{\Omega}:W^{1,2}(\Omega)\hookrightarrow L^{2}(\Omega)
\]
is compact as a composition of bounded composition operators $\varphi^{\ast}$,
$(\varphi^{-1})^{\ast}$ and the compact embedding operator $i_{D}$
\[
i_{D}:W^{1,2}(D)\hookrightarrow L^{2}(D).
\]

The first part of this theorem is proved. 

For any function  $f\in W^{1,2}(\Omega)$ and $g=f\circ\varphi^{-1}\in W^{1,2}(\mathbb D)$ the following inequalities are correct:

\begin{multline}
\biggl(\iint\limits_{\Omega}|f(x,y)-f_{\Omega}|^2~dxdy\biggr)^{\frac{1}{2}}\\=
\inf\limits_{c\in\mathbb R}\biggl(\iint\limits_{\Omega}|f(x,y)-c|^2~dxdy\biggr)^{\frac{1}{2}}
\leq \biggl(\iint\limits_{\Omega}|f(x,y)-f_{\Omega,h}|^2~dxdy\biggr)^{\frac{1}{2}}\\
=\biggl(\iint\limits_{\Omega}|f(x,y)-f_{\Omega,h}|^2J^{-1}_{\varphi}(x,y)J_{\varphi}(x,y)~dxdy\biggr)^{\frac{1}{2}}
\\
\leq \|J_{\varphi} \mid L^{\infty}(\Omega)\|^{-\frac{1}{2}}\cdot
\biggl(\iint\limits_{\Omega}|f(x,y)-f_{\Omega,h}|^2 J_{\varphi}(x,y)~dxdy\biggr)^{\frac{1}{2}}\\
= \|J_{\varphi^{-1}}\mid L^{\infty}(\mathbb D)\|^{\frac{1}{2}}\cdot
\biggl(\iint\limits_{\Omega}|f(x,y)-f_{\Omega,h}|^2 J_{\varphi}(x,y)~dxdy\biggr)^{\frac{1}{2}}\\
= \|J_{\varphi^{-1}}\mid L^{\infty}(\mathbb D)\|^{\frac{1}{2}}\cdot
\biggl(\iint\limits_{\Omega}|f(x,y)-g_{\mathbb D}|^2 J_{\varphi}(x,y)~dxdy\biggr)^{\frac{1}{2}}
\nonumber
\end{multline}
Using the change of variable formula and the Poincar\'e-Sobolev inequality in the unit disc we have
\begin{multline}
\biggl(\iint\limits_{\Omega}|f(x,y)-f_{\Omega}|^2~dxdy\biggr)^{\frac{1}{2}}\leq
\|J_{\varphi^{-1}}\mid L^{\infty}(\mathbb D)\|^{\frac{1}{2}}\cdot
\biggl(\iint\limits_{\mathbb D}|g(u,v)-g_{\mathbb D}|^2 ~dudv\biggr)^{\frac{1}{2}}\\
= \|J_{\varphi^{-1}}\mid L^{\infty}(\mathbb D)\|^{\frac{1}{2}}\cdot
\biggl(\iint\limits_{\mathbb D}|\nabla g(u,v)|^2 ~dudv\biggr)^{\frac{1}{2}}
= \|\psi'\mid L^{\infty}(\mathbb D)\|\cdot
\biggl(\iint\limits_{\Omega}|\nabla f(x,y)|^2 ~dudv\biggr)^{\frac{1}{2}}
\nonumber
\end{multline}

\end{proof}

\section{Eigenvalue Problem for Free Vibrating Membrane}

Eigenvalue problem for free vibrating membrane is equivalent to the corresponding problem for the Neumann-Laplace operator. The classical formulation for smooth domains is the following

\begin{gather}
-\Delta u=\lambda u\,\,\text{in}\,\,\Omega,\label{Neum2}\\
{\frac{\partial u}{\partial n}}\bigg|_{\partial\Omega}=0.\label{Neum12}
\end{gather}

Because conformal regular domains are not necessary smooth the weak statement of the spectral problem for the Neumann-Laplace operator is convenient: A function
$u$ solves the previous problem iff  ($u\in W^{1,2}(\Omega)$) and 
\[
\iint\limits _{\Omega}\nabla u(x,y)\cdot\nabla v(x,y)~dxdy=\lambda\iint\limits _{\Omega}u(x,y)v(x,y)~dxdy
\]
 for all  $v\in W^{1,2}(\Omega)$.

By the Min-Max Principle  \cite{D1} the inverse to the first eigenvalue is equal to the exact constant in the Poincar\'e inequality
$$
\int\limits_{\Omega}|f(x,y)-f_{\Omega}|^2~dxdy\leq B^2_{2,2}[\Omega] \int\limits_{\Omega}|\nabla f(x,y)|^2~dxdy.
$$

We are ready to prove the main result about the spectrum ({\bf Theorem A}). For readers convenience we repeat its formulation:

\vskip0.3cm

{\bf Theorem A.}
{\it Let $\Omega\subset\mathbb R^2$ be a conformal $\alpha$-regular domain. Then the spectrum of Neumann-Laplace operator in $\Omega$ is discrete, can be written in the form
of a non-decreasing sequence
\[
0=\lambda_0[\Omega]<\lambda_{1}[\Omega]\leq\lambda_{2}[\Omega]\leq...\leq\lambda_{n}[\Omega]\leq...\,,
\]
and
\begin{multline}
{1}/{\lambda_1[\Omega]}\leq B^2_{2\alpha/(\alpha-2),2}[\mathbb D]\left(\int\limits_{\mathbb D}|\varphi'(x,y)|^{\alpha}~dxdy\right)^{\frac{2}{\alpha}}\\
\leq 4\pi^{-\frac{2}{\alpha}}\left(\frac{2\alpha-2}{\alpha-2}\right)^{\frac{2\alpha-2}{\alpha}}{\|\psi'\mid L^{\alpha}(\mathbb D)\|^2}.
\nonumber
\end{multline}
where $\psi:\mathbb D\to\Omega$ is the Riemann conformal mapping of the unit disc $\mathbb D\subset\mathbb R^2$ onto $\Omega$. }

\vskip0.3cm

\begin{proof}

By Theorem \ref{thm:comembd} in the case $s=2$ the embedding operator
\[
i:W^{1,2}(\Omega)\hookrightarrow L^{2}(\Omega),\,\,\,
\]
is compact.

Therefore the spectrum of the Neumann-Laplace operator is discrete and  can be written in the form of a non-decreasing sequence. 

By the same theorem and the Min-Max principle we have

\begin{multline}
\int\limits_{\Omega}|f(x,y)-f_{\Omega}|^2~dxdy=\inf\limits_{c\in\mathbb R}\int\limits_{\Omega}|f(x,y)-f_{\Omega}|^2~dxdy
\\
\leq B^2_{2,2}[\Omega] \int\limits_{\Omega}|\nabla f(x,y)|^2~dxdy,
\nonumber
\end{multline}
where $B_{2,2}[\Omega]\leq B_{r,2}[\mathbb D]\cdot \|\psi'|L^{\alpha}(\mathbb D)\|$.

Hence

\[
{1}/{\lambda_1[\Omega]}\leq  B^2_{r,2}[\mathbb D]\left(\int\limits_{\mathbb D}|\psi'(u,v)|^{\alpha}~dudv\right)^{\frac{2}{\alpha}}.
\]

By Proposition \ref{EstCon} 
$$
B_{r,2}[\Omega,h]=B_{r,2}[\mathbb D]\leq 2\pi^{\frac{2-r}{2r}}\left({(r+2)}/{2}\right)^{\frac{r+2}{2r}}.
$$
Recall that in Theorem \ref{thm:comembd} $r=2\alpha/(\alpha-2)$. In this case
\[
B_{2\alpha/(\alpha-2),2}[\mathbb D] \leq 2\pi^{-\frac{1}{\alpha}}\left(\frac{2\alpha-2}{\alpha-2}\right)^{\frac{\alpha-1}{\alpha}}.
\]

Therefore

\begin{multline}
{1}/{\lambda_1[\Omega]}\leq B^2_{2\alpha/(\alpha-2),2}[\mathbb D]\left(\int\limits_{\mathbb D}|\varphi'(x,y)|^{\alpha}~dxdy\right)^{\frac{2}{\alpha}}\\
\leq 4\pi^{-\frac{2}{\alpha}}\left(\frac{2\alpha-2}{\alpha-2}\right)^{\frac{2\alpha-2}{\alpha}}{\|\psi'\mid L^{\alpha}(\mathbb D)\|^2}.
\nonumber
\end{multline}
\end{proof}

\vskip 0.3cm

In the case of conformal $\alpha$-regular domains for $\alpha=\infty$ by Theorem~\ref{thm:infembd} we immediately have

{\bf Theorem B.}
{\it Let $\Omega\subset\mathbb R^2$ be a conformal $\alpha$-regular domain for $\alpha=\infty$. Then the spectrum of Neumann-Laplace operator in $\Omega$ is discrete, can be written in the form
of a non-decreasing sequence
\[
0=\lambda_0[\Omega]<\lambda_{1}[\Omega]\leq\lambda_{2}[\Omega]\leq...\leq\lambda_{n}[\Omega]\leq...\,,
\]
and
\begin{equation}
\label{eq:estinf}
{1}/{\lambda_1[\Omega]}\leq  B^2_{2,2}[\mathbb D]\|\psi'\mid L^{\infty}(\mathbb D)\|^2=
\frac{\|\psi'\mid L^{\infty}(\mathbb D)\|^2}{\left(j_{1,1}^{\prime}\right)^2},
\end{equation}
where $j_{1,1}^{\prime}\approx1.84118$ is the first positive zero of the derivative of the Bessel function $J_1$ and $\psi:\mathbb D\to\Omega$ is the Riemann conformal mapping of the unit disc $\mathbb D\subset\mathbb R^2$ onto $\Omega$. 
}
\vskip 0.3cm

\subsection{Examples}

Now we describe a rather wide class of plane domains for which there exist conformal mappings with Jacobians of the class
$L^p(\mathbb D)$ for some $p>1$, i.e. with complex derivatives of the class
$L^p(\mathbb D)$ for some $p>2$.

\begin{defn}
A homeomorphism $\varphi:\Omega \rightarrow \Omega_1$
between planar domains is called $K$-quasiconformal if it preserves
orientation, belongs to the Sobolev class $W_{\loc}^{1,2}(\Omega)$
and its directional derivatives $\partial_{\alpha}$ satisfy the distortion inequality
$$
\max\limits_{\alpha}|\partial_{\alpha}\varphi|\leq K\min_{{\alpha}}|\partial_{\alpha}\varphi|\,\,\,
\text{a.e. in}\,\,\, \Omega \,.
$$
\end{defn}
Infinitesimally, quasiconformal homeomorphisms transform circles to ellipses
with eccentricity uniformly bounded by $K$. If $K=1$ we recover
conformal homeomorphisms, while for $K>1$ plane quasiconformal mappings need
not be smooth.
\begin{defn}
A domain $\Omega$ is called a $K$-quasidisc if it is the image of the
unit disc $\mathbb{D}$ under a $K$-quasiconformal homeomorphism of
the plane onto itself.
\end{defn}

It is well known that the boundary of any $K$-quasidisc $\Omega$
admits a $K^{2}$-quasi\-con\-for\-mal reflection and thus, for example,
any conformal homeomorphism $\varphi:\mathbb{D}\to\Omega$ can be
extended to a $K^{2}$-quasiconformal homeomorphism of the whole plane
to itself.

The boundaries of quasidiscs are called quasicircles. It is known that there are quasicircles for which no segment has finite length.
The Hausdorff dimension of quasicircles was first investigated by F. W. Gehring and J. V\"ais\"al\"a  \cite{GV73},
who proved that it can take all values in the interval $[1,2)$. S. Smirnov proved recently \cite{Smi10} that the Hausdorff dimension of
any $K$-quasicircle is at most $1+k^2$, where $k = (K-1)/(K +1)$.

Ahlfors's 3-point condition \cite{Ahl63} gives
a complete geometric characterization of quasicircles: a Jordan curve $\gamma$ in the plane is a quasicircle
if and only if for each two points $a, b$ in $\gamma$ the (smaller) arc between them has the
diameter comparable with $|a-b|$. This condition is easily checked for the snowflake.
On the other hand, every quasicircle can be obtained by an explicit snowflake-type
construction (see \cite{Roh01}).

For any planar $K$-quasiconformal homeomorphism $\varphi:\Omega\rightarrow \Omega_1$
the following sharp result is known: $J(z,\varphi)\in L_{\loc}^{p}(\Omega_{1})$
for any $p<\frac{K}{K-1}$ (\cite{G1, A}).
\begin{prop}
\label{prop:confQuasidisc}Any conformal homeomorphism $\varphi:\mathbb{D}\to\Omega$
of the unit disc $\mathbb{D}$ onto a $K$-quasidisc $\Omega$ belongs to $L^{1,p}(\mathbb{D})$
for any $1\le p<\frac{2K^{2}}{K^2-1}$.
\end{prop}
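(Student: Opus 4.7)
The plan is to upgrade the conformal homeomorphism $\varphi:\mathbb{D}\to\Omega$ to a global quasiconformal homeomorphism of the plane and then apply the sharp higher integrability result for Jacobians cited before the proposition.

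First, I would invoke the reflection property of quasidiscs mentioned in the paragraph preceding the statement: since $\Omega$ is a $K$-quasidisc, its boundary admits a $K^{2}$-quasiconformal reflection, and consequently $\varphi$ extends to a $K^{2}$-quasiconformal homeomorphism $\widetilde{\varphi}:\mathbb{C}\to\mathbb{C}$ which coincides with $\varphi$ on $\mathbb{D}$. This is the step that moves us from a purely conformal object on the disc to a quasiconformal one defined on an open neighborhood of $\overline{\mathbb{D}}$, which is what is needed in order for the higher integrability of the Jacobian to bite near $\partial\mathbb{D}$.

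Next, I would apply the Astala--Gehring type statement recalled just before the proposition: for any planar $K'$-quasiconformal map one has $J(z,\widetilde{\varphi})\in L^{q}_{\loc}(\mathbb{C})$ for every $q<K'/(K'-1)$. Taking $K'=K^{2}$ yields
\[
J_{\widetilde{\varphi}}\in L^{q}_{\loc}(\mathbb{C}),\qquad q<\frac{K^{2}}{K^{2}-1}.
\]
Since $\overline{\mathbb{D}}$ is compact, this gives $J_{\widetilde{\varphi}}\in L^{q}(\mathbb{D})$ in the same range. On $\mathbb{D}$ the map $\widetilde{\varphi}$ agrees with the conformal map $\varphi$, so $J_{\widetilde{\varphi}}=J_{\varphi}=|\varphi'|^{2}$ almost everywhere in $\mathbb{D}$.

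Finally, I would convert the integrability of $J_{\varphi}$ into integrability of the complex derivative. From
\[
\iint_{\mathbb{D}}|\varphi'(u,v)|^{2q}\,du\,dv=\iint_{\mathbb{D}}J_{\varphi}(u,v)^{q}\,du\,dv<\infty
\]
for every $q<K^{2}/(K^{2}-1)$, setting $p=2q$ gives $\varphi\in L^{1,p}(\mathbb{D})$ for every $1\le p<2K^{2}/(K^{2}-1)$, which is the claimed range. The one point that requires care is ensuring the Astala exponent is applied to the extension rather than to $\varphi$ itself: a direct application to the conformal $\varphi$ would formally yield no bound, whereas the $K^{2}$-quasiconformal extension provides exactly the exponent $2K^{2}/(K^{2}-1)$ stated in the proposition; this interplay between the reflection constant $K^{2}$ and the sharp Jacobian exponent is the main (and essentially the only) delicate point.
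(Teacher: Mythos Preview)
Your proof is correct and follows essentially the same route as the paper: extend $\varphi$ to a $K^{2}$-quasiconformal self-map of the plane via quasiconformal reflection, apply the sharp Gehring--Astala higher integrability of the Jacobian with exponent $K^{2}/(K^{2}-1)$, and then pass from $J_{\varphi}=|\varphi'|^{2}$ to $\varphi'\in L^{p}(\mathbb{D})$ for $p<2K^{2}/(K^{2}-1)$. Your version is in fact more explicit about the passage from $L^{q}_{\loc}$ to $L^{q}(\mathbb{D})$ via compactness of $\overline{\mathbb{D}}$, which the paper leaves implicit.
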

\begin{proof} Any conformal homeomorphism $\varphi:\mathbb{D}\to\Omega$ can be extended to a $K^2$ quasiconformal homeomorphism  $\psi$
of the whole plane to the whole plane by reflection.
Since the domain $\Omega$ is bounded, $\psi$ belongs to the class $L^p(\Omega)$ for any $1\le p<\frac{2K^2}{K^2-1}$ (\cite{G1}, \cite{A}).
Therefore $\varphi$ belongs to the same class.
\end{proof}

For quasidiscs the following estimate readily follows from Theorem A

\begin{prop} Suppose a conformal homeomorphism $\varphi:\mathbb{D}\to\Omega$ maps
the unit disc $\mathbb{D}$ onto a $K$-quasidisc $\Omega$. Then
\begin{multline}
{1}/{\lambda_1[\Omega]}\leq B^2_{2\alpha/(\alpha-2),2}[\mathbb D]\left(\int\limits_{\mathbb D}|\varphi'(x,y)|^{\alpha}~dxdy\right)^{\frac{2}{\alpha}}\\
 \leq 4\pi^{-\frac{2}{\alpha}}\left(\frac{2\alpha-2}{\alpha-2}\right)^{\frac{2\alpha-2}{\alpha}}{\|\psi'\mid L^{\alpha}(\mathbb D)\|^2}
\nonumber
\end{multline}
for any $2<\alpha<\frac{2K^2}{K^2-1}.$

\end{prop}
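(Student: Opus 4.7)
The plan is to observe that this proposition is essentially a direct specialization of Theorem A to the subclass of conformal regular domains coming from quasidiscs, so the task reduces to verifying the range of admissible exponents $\alpha$.

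First I would invoke Proposition \ref{prop:confQuasidisc}, which tells us that if $\Omega$ is a $K$-quasidisc and $\varphi:\mathbb{D}\to\Omega$ is a conformal homeomorphism, then $\varphi \in L^{1,p}(\mathbb{D})$ for every $1 \le p < \tfrac{2K^{2}}{K^{2}-1}$. In particular, for any $\alpha$ in the open interval $\bigl(2,\tfrac{2K^{2}}{K^{2}-1}\bigr)$ we have
\[
\iint\limits_{\mathbb{D}} |\varphi'(u,v)|^{\alpha}\, du\, dv < \infty,
\]
which by the very definition of conformal $\alpha$-regularity (applied to $\psi = \varphi$ in the statement from \cite{BGU1}, using that the defining integral does not depend on the choice of conformal mapping) shows that $\Omega$ is a conformal $\alpha$-regular domain for every such $\alpha$.

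Once this is in place, I would apply Theorem A directly to the domain $\Omega$ with this value of $\alpha$. Theorem A gives both inequalities in the claim simultaneously: the first inequality
\[
1/\lambda_{1}[\Omega] \le B^{2}_{2\alpha/(\alpha-2),2}[\mathbb{D}]\left(\iint\limits_{\mathbb{D}} |\varphi'(x,y)|^{\alpha}\, dx\, dy\right)^{\frac{2}{\alpha}}
\]
comes from the Min-Max principle combined with the Poincar\'e-Sobolev estimate established in Theorem \ref{thm:comembd} (with $s=2$, $r = 2\alpha/(\alpha-2)$), while the second inequality comes from the explicit bound on $B_{r,2}[\mathbb{D}]$ obtained in Proposition \ref{EstCon}, which yields
\[
B_{2\alpha/(\alpha-2),2}[\mathbb{D}] \le 2\pi^{-\frac{1}{\alpha}}\left(\frac{2\alpha-2}{\alpha-2}\right)^{\frac{\alpha-1}{\alpha}},
\]
whose square produces the constant $4\pi^{-2/\alpha}\bigl(\tfrac{2\alpha-2}{\alpha-2}\bigr)^{(2\alpha-2)/\alpha}$ appearing in the statement.

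There is no real obstacle here: both ingredients have been established in previous sections and the proof is a two-line concatenation. The only point worth being careful about is the strict inequality $\alpha < 2K^{2}/(K^{2}-1)$, which is dictated precisely by the sharp integrability exponent for Jacobians of $K^{2}$-quasiconformal mappings (Astala's theorem as cited in \cite{G1, A}) used in Proposition \ref{prop:confQuasidisc}; for $\alpha$ at or beyond this threshold the integral $\|\varphi'\mid L^{\alpha}(\mathbb{D})\|$ can fail to be finite and so neither the application of Theorem A nor the resulting estimate would be justified.
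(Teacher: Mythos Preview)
Your proposal is correct and matches the paper's approach exactly: the paper introduces this proposition with the single sentence ``For quasidiscs the following estimate readily follows from Theorem A,'' relying implicitly on Proposition~\ref{prop:confQuasidisc} for the admissible range of~$\alpha$, which is precisely the two-step argument you have written out.
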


As the second example we consider the interior of the cardioid. By the Alhfors condition the cardioid is not a quasidisc.
Because the cardioid is a conformal $\infty$-regular domain, we have the following example:

\begin{exa}
\label{exa:cardioid}
Let $\Omega_c$ be the interior of the cardioid. The diffeomorphism
$$
z=\psi(w)=(w+1)^2,z=x+iy,
$$
is conformal and maps the unit disc $\mathbb D$ onto $\Omega_c$. 
Then by Theorem B
$$
\|\psi'\mid L^{\infty}(\mathbb D)\|=\max\limits_{w\in\mathbb D}2|w+1|\leq 4.
$$
Hence
$$
\lambda_1[\Omega_c]\geq \frac{\left(j_{1,1}^{\prime}\right)^2}{16}.
$$
Here $j_{1,1}^{\prime}\approx1.84118$ is the first positive zero of the derivative of the Bessel function $J_1$.
\end{exa}

The third example is $m$-polygon $P_{m}$.

Consider a $m$-polygon $P_{m}$ with vertices $z_{k}$ on the unit
circle and the angles $\alpha_{k}$ are measured in fractions of $\pi$.
Then the conformal mapping of the unit disc $\mathbb{D}$ onto the
$n$-polygon $P_{n}$ is given by the Schwarz-Christoffel formula:
\[
\psi(z)=C\int\limits _{z_{0}}^{z}(z-z_{1})^{\alpha_{1}-1}(z-z_{2})^{\alpha_{2}-1}\cdot...\cdot(z-z_{m})^{\alpha_{m}-1}dz+C_{1}
\]
 and 
\[
\psi'(z)=C(z-z_{1})^{\alpha_{1}-1}(z-z_{2})^{\alpha_{2}-1}\cdot...\cdot(z-z_{m})^{\alpha_{m}-1}.
\]

\begin{exa}
\label{exa:square} $Q_m$ is a regular $m$-polygon with vertices $z_k$ on the unit circle and the angles $\alpha_k=1-2/m$ are  measured in fractions of $\pi$.  The diffeomorphism 
\[
\psi(z)=C\int\limits _{z_{0}}^{z}(z^{m}-1)^{-\frac{2}{m}}~dz+C_{1},\,\,\, z=x+iy,
\]
 is conformal and maps the unit disc $\mathbb{D}$ onto the regular $m$-polygon
$Q_m$. If we have $\varphi(0)=0$ and $|\varphi'(z)|=1$, then $C_{1}=0$
and $C=1$.
Then 
$$
\frac{1}{\lambda_1[\Omega]}\leq  \pi^{-\frac{2}{\alpha}}\inf\limits_{2<\alpha< m}\left(\frac{2\alpha-2}{\alpha-2}\right)^{\frac{2\alpha-2}{\alpha}}
\iint\limits_{\mathbb D}|z^{m}-1|^{-\frac{2\alpha}{m}}~dxdy.
$$
\end{exa}

\section{Estimates for domains conformallly equivalent to a rectangle}

We take the unit ball as the basic domains for our estimates. In many applications more convenient to take as the basic domain a rectangle:
$$
\mathbb Q_{ab}=\{(x,y)\in \mathbb R^2: 0<x<a, 0<y<b\}
$$

In this case we have the following assertion:

\vskip 0.3cm
{\bf Theorem C.}
{\it Let $\Omega\subset\mathbb R^2$ be a plane domain with non-empty boundary. Suppose that there exists a conformal mapping $\psi: \mathbb Q_{ab}\to\Omega$ such that $\psi'\in L^{\alpha}(\mathbb Q_{ab})$ for some $\alpha>2$. Then the spectrum of Neumann-Laplace operator in $\Omega$ is discrete, can be written in the form
of a non-decreasing sequence
\[
0=\lambda_0[\Omega]<\lambda_{1}[\Omega]\leq\lambda_{2}[\Omega]\leq...\leq\lambda_{n}[\Omega]\leq...\,,
\]
\begin{multline}
{1}/{\lambda_1[\Omega]}\leq B^2_{2\alpha/(\alpha-2),2}[\mathbb Q_{ab}]\left(\int\limits_{\mathbb Q_{ab}}|\varphi'(x,y)|^{\alpha}~dxdy\right)^{\frac{2}{\alpha}}\\
\leq \left(\frac{a^2+b^2}{(ab)^{\frac{r-1}{r}}}\right)^2\left(\frac{2\alpha-2}{\alpha-2}\right)^{\frac{2\alpha-2}{\alpha}}{\|\psi'\mid L^{\alpha}(\mathbb Q_{ab})\|^2} \,\,\text{for}\,\,\alpha<\infty
\nonumber
\end{multline}
and 
\begin{multline}
{1}/{\lambda_1[\Omega]}\leq  B^2_{2,2}[\mathbb Q_{ab}]\|\psi'\mid L^{\infty}(\mathbb Q_{ab})\|^2\\=
\left(\frac{\max\{a,b\}}{\pi}\right)^2\|\psi'\mid L^{\infty}(\mathbb Q_{ab})\|^2,\,\,\text{for}\,\,\alpha=\infty.
\nonumber
\end{multline}
}

\begin{proof}
 Because $\mathbb Q_{ab}$ is the convex domain, then by Proposition~\ref{EstCon} we have that
\begin{multline}
B_{r,2}(\mathbb Q_{ab})\leq \frac{d^2}{2|\mathbb Q_{ab}|}\left(\frac{1-1/2+1/r}{1/2-1/2+1/r}\right)^{1-1/2+1/r}\omega_n^{1-1/2}|\mathbb Q_{ab}|^{1/2-1/2+1/2}\\
=\left(\frac{a^2+b^2}{(ab)^{\frac{r-1}{r}}}\right)^2\left(\frac{2\alpha-2}{\alpha-2}\right)^{\frac{2\alpha-2}{\alpha}}.
\nonumber
\end{multline}

For $B_{2,2}(\mathbb Q_{ab})$ there is the exact calculation (see, for example, \cite{{NKP}})
$$
B_{2,2}(\mathbb Q_{ab})=\frac{\max\{a,b\}}{\pi}.
$$

Now, replacing in the proof of Theorem~\ref{thm:comembd} and Theorem~\ref{thm:infembd}  the unit disc $\mathbb D$ by the 
rectangle $\mathbb Q_{ab}$ and using the well known fact that $\mathbb Q_{ab}$ is Poincar\'e domain we prove Theorem C similarly to proof Theorem A and Theorem~B.
\end{proof}
\vskip0.3cm

\begin{exa}
\label{exa:expo} Let $a=1$ and $b=2\pi$. The conformal mapping $\psi=e^w:\mathbb Q_{ab}\to R_e$ maps the rectangle $\mathbb Q_{ab}$  onto the split ring $R_e$. 
Then, by Theorem C 
$$
\|\psi'\mid L^{\infty}(\mathbb D)\|=\max\limits_{u\in (0,1)}e^u\leq e.
$$
Hence
$$
\lambda_1[R_e]\geq \left(\frac{\pi}{2\pi}\right)^2\frac{1}{\|\psi'\mid L^{\infty}(\mathbb D)\|^2}=\frac{1}{4e^2}.
$$
\end{exa}

Finally we note, that using the elementary conformal functions like $\sin$, $\tan$ and so on, it is possible to construct many non-trivial examples of domains in which Theorem B gives estimates of the first non-trivial eigenvalue of the Neumann-Laplace operator.

\section{Comparison of the estimates with previous results}

Let $\mathbb D_a$ denote the disc of the radius $a>0$. Then for $\mathbb D_a$  our estimates is exact. In this case 
$$
\mathbb D_a=\psi(\mathbb D),
$$
where $\psi(z)=az$. By Theorem B:
$$
\lambda_1[\mathbb D_r]\geq \frac{\left(j_{1,1}^{\prime}\right)^2}{\|\psi'\mid L^{\infty}(\mathbb D)\|^2}=\frac{\left(j_{1,1}^{\prime}\right)^2}{a^2}.
$$ 

In the paper \cite{PW} the authors proved that if $\Omega$ is convex with diameter $d(\Omega)$ (see, also \cite{ENT, FNT}), then
\begin{equation}
\label{eq:PW}
\lambda_1[\Omega]\geq \frac{\pi^2}{d(\Omega)^2}.
\end{equation}

\begin{defn}
Let $\Omega=\psi(\mathbb D)$ be a conformal $\alpha$-regular domain for $\alpha=\infty$. 
We call a domain $\Omega$ as {\it a conformal  uniform domain} if
$$
\|\psi'\mid L^{\infty}(\mathbb D)\|< \frac{j_{1,1}^{\prime}}{\pi}d(\Omega).
$$
\end{defn}

For the class of conformal uniform domains the estimate (\ref{eq:estinf}) (Theorem B) improves the estimate (\ref{eq:PW}):
$$
\lambda_1[\Omega]\geq \frac{\left(j_{1,1}^{\prime}\right)^2}{\|\psi'\mid L^{\infty}(\mathbb D)\|^2}> \frac{\pi^2}{d(\Omega)^2}.
$$

The class of conformal uniform domains is not empty. Consider domains 
$$
\Omega_n=\psi_n(\mathbb D),\,\, \psi_n(z)=(z+n)^2, \,\,n\geq 1.
$$
Then 
$$
\|\psi'_n\mid L^{\infty}(\mathbb D)\|=\max\limits_{|z|\leq 1}|2(z+n)|=2(n+1).
$$
From another side 
$$
d(\Omega)\geq |\psi_n(1)-\psi_n(-1)|=|(n+1)^2-(n-1)^2|=4n.
$$

Hence $\|\psi'_n\mid L^{\infty}(\mathbb D)\|< \frac{j_{1,1}^{\prime}}{\pi}d(\Omega_n)$ for $n>5$ and $\Omega_n$ are conformal uniform domains. For $n>3$ the domains $\Omega_n$ are convex, because 
$$
Re\left\{1+z\frac{\psi''_n}{\psi'_n}\right\}>0,\,\,n>3.
$$

The conformal uniform domains can be characterized in geometric terms: a conformal $\infty$-regular domain $\Omega$ is conformal uniform iff
$$
R\left(\psi(z),\Omega\right)\leq \left(1-|z|^2\right)d(\Omega),
$$
where $R\left(\psi(z),\Omega\right)$ is a conformal radius of $\Omega$.

Another example of conformal uniform domains is the domains $\Omega_{\alpha}=\psi_{\alpha}(\mathbb D)$, 
where $\psi(z)=e^{\alpha z}$, for $0=\alpha_0<\alpha<\alpha_1$, where $\alpha_0$ and $\alpha_1$ are the zeros of the function $f(\alpha)=1-\frac{\pi}{j_{1,1}^{\prime}}\alpha-e^{-2\alpha}$. For these mappings $\psi_{\alpha}$:
$$
\|\psi'_{\alpha}\mid L^{\infty}(\mathbb D)\|=\max\limits_{|z|\leq 1}|\alpha e^{\alpha z}|=\alpha e^{\alpha},
$$
and 
$$
d(\Omega)\geq |\psi_{\alpha}(1)-\psi_{\alpha}(-1)|=e^{\alpha}-e^{-\alpha}.
$$
Hence the domains $\Omega_{\alpha}$ are conformal uniform domains if 
$$
\alpha e^{\alpha}\leq \frac{j_{1,1}^{\prime}}{\pi}\left(e^{\alpha}-e^{-\alpha}\right),
$$
or 
$$
f(\alpha)=1-\frac{\pi}{j_{1,1}^{\prime}}\alpha-e^{-2\alpha}\geq 0.
$$

For these $\alpha$ the domains $\Omega_{\alpha}$ are convex, because
$$
Re\left\{1+z\frac{\psi''_{\alpha}}{\psi'_{\alpha}}\right\}=Re\left\{1+\alpha z\right\}>0,\,\, for\,\, 0=\alpha_0<\alpha<\alpha_1.
$$

\vskip 0.3cm

\noindent Vladimir Gol'dshtein \, \hskip 3.2cm Alexander Ukhlov

\noindent Department of Mathematics \hskip 2.25cm Department of Mathematics

\noindent Ben-Gurion University of the Negev \hskip 1.05cm Ben-Gurion
University of the Negev

\noindent P.O.Box 653, Beer Sheva, 84105, Israel \hskip 0.7cm P.O.Box
653, Beer Sheva, 84105, Israel

\noindent E-mail: vladimir@bgu.ac.il \hskip 2.5cm E-mail: ukhlov@math.bgu.ac.il
\end{document}